\newtheorem{theorem}{Theorem}[section]
\newtheorem{corollary}[theorem]{Corollary}
\newtheorem{lemma}[theorem]{Lemma}
\newtheorem{proposition}[theorem]{Proposition}
\newtheorem{example}[theorem]{Example}
\theoremstyle{definition}
\newtheorem{definition}[theorem]{Definition}
\theoremstyle{remark}
\newtheorem{remark}[theorem]{Remark}
\numberwithin{equation}{section}
\title[]{Non-Commutative ternary Nambu-Poisson algebras and ternary Hom-Nambu-Poisson algebras}
\author{}
\address{}
\email{}
\author{Hanene Amri}
\address{Universit\'e Badji Mokhtar Annaba, BP 12 d\'epartement de math\'ematiques, facult\'e des sciences, Algeria}
\email{akian296@yahoo.fr}
\author{Abdenacer Makhlouf}
\address{Universit\'{e} de Haute Alsace,  Laboratoire de Math\'{e}matiques, Informatique et Applications,
4, rue des Fr\`eres Lumi\`ere F-68093 Mulhouse, France}
\email{abdenacer.makhlouf@uha.fr}
\thanks{
}
\subjclass[2010]{17A42,17A30,17A40,17B63}
\keywords{(non commutative) ternary Nambu-Poisson algebra, (non commutative)  ternary Hom-Nambu-Poisson algebra, ternary Hom-Nambu-Poisson morphism.}
\begin{document}

\begin{abstract}

 The main purpose  of this paper is to study non-commutative ternary Nambu-Poisson algebras  and their Hom-type  version. We provide  construction results dealing with tensor product and direct sums of two (non-commutative) ternary (Hom-)Nambu-Poisson algebras. Moreover, we explore twisting principle of  (non-commutative) ternary Nambu-Poisson algebras along with an algebra
 morphism that lead to construct (non-commutative) ternary Hom-Nambu-Poisson algebras. 
Furthermore,  we  provide examples and 
 a 3-dimensional classification of  non-commutative ternary Nambu-Poisson algebras.

\end{abstract}

\maketitle

\section*{Introduction}\label{sec:intro}
 In the 70's, Nambu proposed a generalized Hamiltonian system based on a ternary product, the Nambu-Poisson bracket, which allows to use more that one hamiltonian \cite{n:generalizedmech}. More recent motivation for ternary brackets appeared in string theory and M-branes, ternary Lie type structure was closely linked to the supersymmetry and gauge symmetry transformations of the world-volume theory of multiple coincident M2-branes and was applied to the study of  Bagger-Lambert theory. Moreover ternary operations appeared in the study of some quarks models.  
In 1996, quantization of Nambu-Poisson brackets were investigated in \cite{GDito},  it was presented in a novel approach of Zariski, this quantization is based on the factorization on $\mathds{R}$ of polynomials of several variables.\\

The algebraic formulation of Nambu mechanics was discussed in \cite{Takhtajan} and    Nambu algebras was studied in \cite{f:nliealgebras} as a natural generalization of a Lie algebra for higher-order algebraic operations. By definition, Nambu algebra of order \emph{n} over a field $\mathbb{K}$ of characteristic zero consists of a vector space \emph{V} over $\mathbb{K}$ together with a $\mathbb{K}$-multilinear skew-symmetric operation $[.,\cdots ,.] :\Lambda^{n}V \rightarrow V$, called the Nambu bracket, that satisfies the following generalization of the Jacobi identity. Namely, for any $x_{1},...,x_{n-1} \in V$ define an adjoint action
$ad(x_{1},...,x_{n-1}):V\rightarrow V$
by
$ad(x_{1},...,x_{n-1})x_{n} = [x_{1},...,x_{n-1}, x_{n}], x_{n} \in V$.

Then the fundamental identity is a condition saying that the adjoint action is a derivation
with respect the Nambu bracket, i.e. for all $x_{1},..., x_{n-1}, y_{1},..., y_{n} \in V$
\begin{equation}\label{NambuIdenAd}
ad(x_{1},..., x_{n-1})[y_{1},..., y_{n}]=
\sum\limits_{k=1}^{n}[y_{1},..., ad(x_{1},..., x_{n-1})y_{k},..., y_{n}].
\end{equation}
 When $n = 2$, the fundamental identity becomes the Jacobi identity and we get a definition of a Lie algebra.

Different aspects of Nambu mechanics, including quantization, deformation and various algebraic constructions for Nambu algebras have recently been studied. Moreover a twisted generalization, called Hom-Nambu algebras, was introduced in \cite{ams:gennambu}. This kind of algebras called Hom-algebras appeared as deformation of algebras of vector fields using $\sigma$-derivations. The first examples concerned $q$-deformations of Witt and Virasoro algebras. Then Hartwig, Larsson and Silvestrov introduced a general framework and studied Hom-Lie algebras \cite{Hartwig}, in which Jacobi identity is twisted by a homomorphism. The corresponding associative algebras, called  Hom-associative algebras was  introduced  in \cite{Makhloufsil}.  Non-commutative Hom-Poisson algebras was discussed in \cite{Yau:Noncomm}. Likewise, $n$-ary algebras of Hom-type was introduced in \cite{ams:gennambu}, see also \cite{ams:MabroukTernary,ams:MabroukRep,ams:ternary,Yau:NaryhomNambu,Yau:HomAssofHomNam}.

We aim in this paper to explore and study non-commutative ternary Nambu-Poisson algebras and their Hom-type version.
The paper includes five Sections. In the first one, we  summarize basic definitions of  (non-commutative) ternary Nambu-Poisson algebras and discuss examples.
In the second Section, we  recall some basics about Hom-algebra structures and  introduce the notion of  (non-commutative) ternary Hom-Nambu-Poisson algebra. Section 3 is dedicated to construction of (non-commutative) ternary Hom-Nambu-Poisson algebras using direct sums and tensor products. In Section 4, we extend twisting principle to ternary Hom-Nambu-Poisson algebras. It is  used to build new structures with a given ternary (Hom-)Nambu-Poisson algebra and  an algebra morphism. This process is used to construct ternary Hom-Nambu-Poisson algebras corresponding to the ternary algebra of polynomials where the bracket is defined by the Jacobian. We provide in the last  section a classification of 3-dimensional ternary Nambu-Poisson algebras and corresponding Hom-Nambu-Poisson algebras using twisting principle.\\

\section{Ternary (Non-commutative) Nambu-Poisson algebra }
In the section we review some basic definitions and fix notations.  In the sequel,  $A$ denotes  a vector space over $\mathbb{K}$, where $\mathbb{K}$ is  an algebraically closed field of characteristic zero.
Let  $\mu:A\times A \rightarrow A$ be a bilinear map,  we denote by  $\mu^{op}:A^{\times 2}\rightarrow A$ the opposite map, i.e., $\mu^{op}=\mu\circ  \tau$ where $\tau: A^{\times 2}\rightarrow A^{\times 2}$ interchanges the two variables.
 A ternary algebra is given by a pair $(A,m)$, where $m$ is a  ternary operation on $A$, that is a trilinear map $m: A\times A\times A \rightarrow  A$, which is denoted sometimes by brackets. 

% \begin{align*}
%m: A\times A\times A \rightarrow  A\\
%(x,y,z)\mapsto m(x,y,z)
%\end{align*}

\begin{definition}
A \emph{ternary Nambu algebra} is a ternary algebra $(A,\{\ , \ , \ \})$  satisfying  the fundamental identity defined as 
\begin{eqnarray}\label{NambuIdentity}
&& \{x_{1},x_{2},\{x_{3},x_{4},x_{5}\}\}=\nonumber \\ &&  \{\{x_{1},x_{2},x_{3}\},x_{4},x_{5}\}+\{x_{3},\{x_{1},x_{2},x_{4}\},x_{5}\}+
\{x_{3},x_{4},\{x_{1},x_{2},x_{5}\}\}
\end{eqnarray}
for all $x_{1},x_{2},x_{3},x_{4},x_{5}\in A$.

This identity is sometimes called   Filippov identity or Nambu identity, and it is equivalent to the identity \eqref{NambuIdenAd} with $n=3$.

A \emph{ternary Nambu-Lie algebra} or $3$-Lie algebra is a ternary Nambu algebra for which the bracket is  skew-symmetric, that is for all $\sigma \in S_{3}$,   where $S_{3}$ is the permutation group, 
    $$[x_{\sigma(1)},x_{\sigma(2)},x_{\sigma(3)}]=Sgn(\sigma)[x_{1},x_{2},x_{3}].$$

Let $A$ and $A'$ be two ternary Nambu algebras (resp. Nambu-Lie algebras). A linear map $f:A\rightarrow A'$ is a \emph{morphism} of a ternary Nambu algebras (resp. ternary Nambu-Lie algebras)  if it satisfies
\begin{center}
$f(\{ x,y,z\}_A)=\{f(x),f(y),f(z)\}_{A'}$.
\end{center}
\end{definition}

\begin{example}
The polynomials of variables $x_{1},x_{2},x_{3}$ with the ternary operation defined by the Jacobian function:
\begin{align}\label{jacobian}
\{f_{1},f_{2},f_{3}\}=\left|
\begin{array}{ccc}
\frac{\partial  f_{1}}{\partial  x_{1}} & \frac{\partial  f_{1}}{\partial  x_{2}} & \frac{\partial  f_{1}}{\partial  x_{3}}\\
\frac{\partial  f_{2}}{\partial  x_{1}} & \frac{\partial  f_{2}}{\partial  x_{2}} & \frac{\partial  f_{2}}{\partial  x_{3}}\\
\frac{\partial  f_{3}}{\partial  x_{1}} & \frac{\partial  f_{3}}{\partial  x_{2}} & \frac{\partial  f_{3}}{\partial  x_{3}}
\end{array}
\right|,
\end{align}
is a ternary Nambu-Lie algebra.
\end{example}
\begin{example}
Let $V=\mathds{R}^{4}$ be the 4-dimensional oriented Euclidian space over $\mathds{R}$. The bracket of 3 vectors $\overrightarrow{x},\overrightarrow{y},\overrightarrow{z}$ is given by

\begin{center}
$$[x,y,z]=\overrightarrow{x}\times\overrightarrow{y}\times\overrightarrow{z}=\left|
\begin{array}{cccc}
x_{1} & y_{1} & z_{1} & e_{1} \\
x_{2} & y_{2} & z_{2} & e_{2} \\
x_{3} & y_{3} & z_{3} & e_{3} \\
x_{4} & y_{4} & z_{4} & e_{4}
\end{array}
\right|,$$
\end{center}
where $\{e_{1},e_{2},e_{3},e_{4}\}$ is a basis of $\mathds{R}^{4}$ and $\overrightarrow{x}=\sum\limits_{i=1}\limits^{3}x_{i}\overrightarrow{e_{i}}$, $\overrightarrow{y}=\sum\limits_{i=1}\limits^{3}y_{i}\overrightarrow{e_{i}}$ and $\overrightarrow{z}=\sum\limits_{i=1}\limits^{3}z_{i}\overrightarrow{e_{i}}$. Then $(V,[.,.,.])$ is a ternary Nambu-Lie algebra.
\end{example}

Now,  we introduce the notion of (non-commutative) ternary Nambu-Poisson algebra.

\begin{definition}
A \emph{non-commutative ternary Nambu-Poisson algebra} is a triple $(A,\mu,\{.,.,.\})$ consisting of  a $\mathbb{K}$-vector space $A$, a bilinear map $\mu:A\times A\rightarrow A$ and a trilinear map $\{.,.,.\}: A\otimes A\otimes A\rightarrow A$ such that
\begin{enumerate}
  \item $(A,\mu)$ is a binary associative algebra,
  \item $(A,\{.,.,.\})$ is a ternary Nambu-Lie algebra,
 \item the following Leibniz rule
 \begin{align*}
\{x_{1},x_{2},\mu(x_{3},x_{4})\}=\mu(x_{3},\{x_{1},x_{2},x_{4}\})+\mu(\{x_{1},x_{2},x_{3}\},x_{4})
\end{align*}
\end{enumerate}
holds for  all $x_{1},x_{2},x_{3}\in A$.
\end{definition}

A ternary Nambu-Poisson algebra is a non-commutative ternary Nambu-Poisson algebra $(A,\mu,\{.,.,.\})$ for which $\mu$ is commutative, then $\mu$ is commutative unless otherwise stated.\\
In a (non-commutative) ternary Nambu-Poisson algebra, the ternary bracket $\{.,.,.\}$ is called Nambu-Poisson bracket.\\
Similarly, a non-commutative $n$-ary Nambu-Poisson algebra is a triple $(A,\mu,\{.,\cdots ,.\})$ where  $(A,\{.,\cdots ,.\})$  defines an $n$-Lie algebra satisfying similar Leibniz rule with respect to $\mu$.\\
A morphism of (non-commutative) ternary Nambu-Poisson algebras is a linear map that is a morphism of the underlying ternary Nambu-Lie algebras and associative algebras.\\\\

\begin{example}
Let $C^{\infty}(\mathds{R}^{3})$ be the algebra of $C^{\infty}$ functions  on $\mathds{R}^{3}$ and $x_{1},x_{2},x_{3}$ the coordinates on $\mathds{R}^{3}$. We define the ternary brackets as in \eqref{jacobian}, then $(C^{\infty}(\mathds{R}^{3}),\{.,.,.\})$ is a ternary Nambu-Lie algebra. In addition the bracket satisfies the Leibniz rule: $\{f g,f_{2},f_{3}\}=f \{g,f_{2},f_{3}\}+\{f,f_{2},f_{3}\} g$ where $f,g,f_2,f_3\in C^{\infty}(\mathds{R}^{3})$ and  the multiplication being the pointwise multiplication  that is  $fg(x)=f(x) g(x)$. Therefore,  the algebra is a ternary Nambu-Poisson algebra.\\
This algebra was considered already in 1973 by Nambu \cite{n:generalizedmech} as a possibility of extending the Poisson bracket of standard hamiltonian mechanics to bracket of three functions defined by the Jacobian. Clearly, the Nambu bracket may be generalized further to a Nambu-Poisson allowing for an arbitrary number of entries.\\
In particular, the algebra of polynomials of variables $x_{1},x_{2},x_{3}$ with the ternary operation defined by the Jacobian function in \eqref{jacobian}, is a ternary Nambu-Poisson algebra.
\end{example}

\begin{remark}
The $n$-dimensional ternary Nambu-Lie algebra of Example 1.3 does not carry a non-commutative Nambu-Poisson algebra structure except that one given by a trivial multiplication.
\end{remark}

\section{Hom-type (non commutative) ternary Nambu-Poisson algebras}

In this section, we present various Hom-algebra structures. The main feature of Hom-algebra structures is that usual identities are
deformed by an endomorphism and when the structure map is the identity, we recover the usual algebra structure.\\

A  Hom-algebra (resp. ternary Hom-algebra) is a triple $(A,\nu,\alpha)$ consisting of a $\mathbb{K}$-vector space $A$, a
bilinear map $ \nu: A \times A \rightarrow A$ (resp. a trilinear map $ \nu: A \times A\times A \rightarrow A$) and a linear map $\alpha : A \rightarrow A$.
A binary Hom-algebra $(A,\mu, \alpha)$ is said to be multiplicative if $\alpha \circ \mu=\mu \circ \alpha ^{\otimes 2}$ and it is called commutative if $\mu=\mu^{op}$. A ternary Hom-algebra $(A,m, \alpha)$ is said to be multiplicative if $\alpha \circ m = m \circ \alpha ^{\otimes 3}$.
Classical binary (resp. ternary)  algebras are regarded as binary (resp. ternary) Hom-algebras with identity twisting map.
We will often use the abbreviation $xy$ for $\mu(x,y)$ when there is no ambiguity.
 For a linear map $\alpha: A\rightarrow A$, denote by $\alpha^{n}$ the $n$-fold composition of $n$-copies of $\alpha$, with $\alpha^{0}\equiv Id$. \\

\begin{definition} \label{def:HomAssAlg}
A  Hom-algebra $(A,\mu ,\alpha)$ is a \emph{ Hom-associative algebra} if it   satisfies the Hom-associativity condition,  that is 
\begin{center}
$\mu(\alpha(x),\mu(y,z)) = \mu(\mu(x , y), \alpha(z))$ for all $x, y, z \in A$.
\end{center}
\end{definition}
\begin{remark}
When $\alpha$ is the identity map, we recover the classical associativity condition, then usual  associative algebras.
\end{remark}

\begin{definition}\label{def:TerAlg}
A \emph{ternary Hom-Nambu algebra} is a triple $(A,\{.,.,.\}, \widetilde{\alpha})$ consisting of a $\mathbb{K}$-vector space $A$, a
ternary map $\{.,.,.\}: A\times A \times A\rightarrow A$ and a pair of linear maps $\widetilde{\alpha}=(\alpha_{1},\alpha_{2})$ where $\alpha_{1},\alpha_{2}:A
\rightarrow A$  satisfying

\begin{equation}
\begin{split}
 \{\alpha_{1}(x_{1}), \alpha_{2}(x_{2}), \{x_{3}, x_{4}, x_{5}\}\} =
 \{\{x_{1}, x_{2}, x_{3}\}, \alpha_{1}(x_{4}), \alpha_{2}(x_{5})\} +\\ \{\alpha_{1}(x_{3}), \{x_{1}, x_{2}, x_{4}\}, \alpha_{2}(x_{5})\} + \{\alpha_{1}
 (x_{3}), \alpha_{2}(x_{4}), \{x_{1}, x_{2}, x_{5}\}\}.
\end{split}
\end{equation}
\end{definition}
We call the above condition the ternary Hom-Nambu identity.\\

Generally, the $n$-ary Hom-Nambu algebras are defined by the following Hom-Nambu identity
\begin{align*}
&\{\alpha_{1}(x_{1}), ..., \alpha_{n-1}(x_{n-1}), \{x_{n}, ..., x_{2n-1}\}\} \\
&=\sum_{i=n}^{2n-1}\{\alpha_{1}(x_{n}), ..., \alpha_{i-n}(x_{i-1}),\{x_{1},..., x_{n-1}, x_{i}\},\alpha_{i-n+1}(x_{i+1})..., \alpha_{n-1}(x_{2n-1})\}
\end{align*}
for all $(x_{1},..., x_{2n-1})\in A^{2n-1} $.

\begin{remark}
A Hom-Nambu algebra is a \emph{ Hom-Nambu-Lie} algebra if the bracket is skew-symmetric.

\end{remark}

\begin{definition}
A \emph{non-commutative ternary Hom-Nambu-Poisson algebra} is a tuple $(A,\mu,\beta,\{.,.,.\}, \widetilde{\alpha})$ consisting of a vector space $A$, a ternary operation $\{.,.,.\}:A\times A\times A\rightarrow A$, a binary operation $\mu:A\times A\rightarrow A$, a pair of linear maps $\widetilde{\alpha}=(\alpha_{1},\alpha_{2})$ where $\alpha_{1},\alpha_2:A \rightarrow A$,  and a linear map $\beta:A \rightarrow A$ such that:
\begin{enumerate}
  \item $(A,\mu,\beta)$ is a binary Hom-associative algebra,
 \item $(A,\{.,.,.\},\widetilde{\alpha})$ is a ternary Hom-Nambu-Lie algebra,
 \item
 $\{\mu(x_{1},x_{2}),\alpha_{1}(x_{3}),\alpha_{2}(x_{4})\}= \mu(\beta(x_{1}),\{x_{2},x_{3},x_{4}\})+\mu(\{x_{1},x_{3},x_{4}\},\beta(x_{2}))$. 
\end{enumerate}

The third condition  is called  Hom-Leibniz identity.
\end{definition}

%If $\alpha_{1}=\alpha_{2}=\beta=\alpha$ then this form a class that can be interesting.

\begin{remark}
Notice that $\mu$ is not assumed to be  commutative. When $\mu$ is a commutative multiplication, then $(A,\mu,\beta,\{.,.,.\},\widetilde{\alpha})$ is said to be  a ternary Hom-Nambu-Poisson algebra.

We recover the classical (non-commutative) ternary Nambu-Poisson algebra when $\alpha_{1}=\alpha_{2}=\beta=Id$.

Similarly, a non-commutative $n$-ary Hom-Nambu-Poisson algebra is a tuple 

$(A,\mu,\beta,\{.,\cdots ,.\}, \widetilde{\alpha})$ where  $(A,\{.,\cdots,.\},\widetilde{\alpha})$ with $\widetilde{\alpha}=(\alpha_{1},\cdots , \alpha_{n-1})$  that defines an $n$-ary Hom-Nambu-Lie algebra satisfying similar Leibniz rule with respect to $(A,\mu,\beta )$.

In the sequel we will mainly interested in the class of non-commutative ternary Nambu-Poisson algebras where  $\alpha=\alpha_1=\alpha_2=\beta$, for which  we refer by a quadruple $(A,\mu,\{.,.,.\},\alpha)$.
\end{remark}

\begin{definition}
Let  $(A,\mu,\{.,.,.\},\alpha)$ be a (non-commutative) ternary Hom-Nambu-Poisson algebra.
It  is said to be  \emph{multiplicative} if
\begin{eqnarray*}
\alpha(\{x_{1},x_{2},x_{3}\})&=&\{\alpha(x_{1}),\alpha(x_{2}),\alpha(x_{3})\},\\
 \alpha \circ \mu&=&\mu \circ \alpha^{\otimes 2}.
\end{eqnarray*}
If in addition $\alpha$ is bijective then it is called  \emph{regular}.
\end{definition}

 Let $(A',\mu_{A'},\{.,.,.\}_{A'},\alpha_{A'})$ be another such quadruple. A \emph{weak morphism} $\varphi:A\rightarrow A'$ is a linear map such that
\begin{itemize}
  \item $\varphi \circ \{.,.,.\}=\{.,.,.\}_{A'}\circ \varphi^{\otimes 3}$,
  \item $\varphi \circ \mu=\mu_{\beta}\circ \varphi^{\otimes 2}$.
\end{itemize}
A morphism $\varphi:A\rightarrow A'$ is a weak morphism for which we have in addition $\varphi \circ \alpha=\alpha_{A'}\circ \varphi.$

\begin{definition}
Let $(A,\mu,\{.,.,.\},\alpha)$ and $(A',\mu',\{.,.,.\}',\alpha')$ be two  (non-commutative) ternary Hom-Nambu-Poisson  algebras.
A linear map $f : A\rightarrow A'$ is a \emph{morphism} of (non-commutative) ternary Hom-Nambu-Poisson algebras if it satisfies:
\begin{eqnarray}
f(\{x_{1},x_{2}, x_{3}\}) &=& \{f(x_{1}),f(x_{2}) ,f(x_{3})\}' ,\\
f \circ \mu&=&\mu'\circ f^{\otimes 2},\\
f\circ \alpha &= & \alpha' \circ f.
\end{eqnarray}
It said to be a  \emph{weak morphism} if    hold only  the two first conditions.
\end{definition}

\begin{proposition}
Let $(A_{1},\mu_{1},\{.,.,.\}_{1},\alpha_{1})$ and $(A_{2},\mu_{2},\{.,.,.\}_{2},\alpha_{2})$ be two ternary (non-commutative) Hom-Nambu-Poisson algebras.
A linear map $\phi:A_{1}\rightarrow A_{2}$ is a morphism of ternary (non-commutative) Hom-Nambu-Poisson algebras if and only if
 $\Gamma_{\phi}\subseteq A_{1}\oplus A_{2}$ is a Hom-Nambu-Poisson subalgebra of
$$(A_{1}\oplus A_{2},\mu_{A_{1}\oplus A_{2}},\{.,.,.\}_{A_{1}\oplus A_{2}},\alpha_{A_{1}\oplus A_{2}})$$
where $\Gamma_{\phi}=\{(x,\phi(x)): x\in A_1\}\subset A_{1}\oplus A_{2}.$
\end{proposition}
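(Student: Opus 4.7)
The plan is to exploit the componentwise Hom-Nambu-Poisson structure on $A_{1}\oplus A_{2}$, which although not written out explicitly in the statement is the obvious one: $\mu_{A_{1}\oplus A_{2}}((x,x'),(y,y'))=(\mu_{1}(x,y),\mu_{2}(x',y'))$, $\{(x,x'),(y,y'),(z,z')\}_{A_{1}\oplus A_{2}}=(\{x,y,z\}_{1},\{x',y',z'\}_{2})$, and $\alpha_{A_{1}\oplus A_{2}}(x,x')=(\alpha_{1}(x),\alpha_{2}(x'))$. A brief remark should verify that this indeed defines a (non-commutative) ternary Hom-Nambu-Poisson algebra, which follows immediately because each axiom (Hom-associativity, ternary Hom-Nambu identity with skew-symmetry, and the Hom-Leibniz identity) holds coordinatewise.

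For the forward direction, assume $\phi$ is a morphism. I take arbitrary elements of $\Gamma_{\phi}$, namely $(x,\phi(x)),(y,\phi(y)),(z,\phi(z))$, and compute each operation. For the product I get $(\mu_{1}(x,y),\mu_{2}(\phi(x),\phi(y)))=(\mu_{1}(x,y),\phi(\mu_{1}(x,y)))\in \Gamma_{\phi}$ using the multiplicativity of $\phi$; the analogous calculations for the ternary bracket and for $\alpha_{A_{1}\oplus A_{2}}$ use the other two morphism conditions. This shows $\Gamma_{\phi}$ is closed under all the structure maps, hence is a Hom-Nambu-Poisson subalgebra.

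For the converse, suppose $\Gamma_{\phi}$ is a Hom-Nambu-Poisson subalgebra. Applied to generators $(x,\phi(x))$ and $(y,\phi(y))$, closure of $\Gamma_{\phi}$ under $\mu_{A_{1}\oplus A_{2}}$ forces the second coordinate $\mu_{2}(\phi(x),\phi(y))$ to coincide with $\phi$ of the first coordinate, yielding $\phi(\mu_{1}(x,y))=\mu_{2}(\phi(x),\phi(y))$. Closure under the ternary bracket gives $\phi(\{x,y,z\}_{1})=\{\phi(x),\phi(y),\phi(z)\}_{2}$ in the same way, and closure under $\alpha_{A_{1}\oplus A_{2}}$ gives $\phi\circ\alpha_{1}=\alpha_{2}\circ\phi$. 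Together these are exactly the three conditions that make $\phi$ a morphism of (non-commutative) ternary Hom-Nambu-Poisson algebras.

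There is no real obstacle here; the only point that requires care is keeping the axioms of the direct-sum structure straight so that the equivalence is genuinely between the three morphism equations and the three closure conditions, and observing that the graph of a linear map is automatically a linear subspace so only closure under the multiplications, bracket, and twisting map needs to be checked.
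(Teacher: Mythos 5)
Your proof is correct and follows essentially the same route as the paper's: a direct check that the three morphism identities for $\phi$ correspond exactly to closure of the graph $\Gamma_{\phi}$ under $\mu_{A_{1}\oplus A_{2}}$, the ternary bracket, and $\alpha_{A_{1}\oplus A_{2}}$, using the componentwise direct-sum structure from Theorem 3.1. If anything, your write-up is slightly more systematic than the paper's, which omits the closure under $\mu$ in the forward direction and blurs the two implications in its final computation.
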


\begin{proof}
Let $\phi:(A_{1},\mu_{1},\{.,.,.\}_{1},\alpha_{1})\rightarrow (A_{2},\mu_{2},\{.,.,.\}_{2},\alpha_{2})$ be a morphism of ternary Hom-Nambu-Poisson
 algebras.\\
We have
\begin{align*}
\{x_{1}+\phi(x_{1}),x_{2}+\phi(x_{2}),x_{3}+\phi(x_{3})\}_{A_{1}\oplus A_{2}}&= \{x_{1},x_{2},x_{3}\}_{1}+\{\phi(x_{1}),\phi(x_{2}),\phi(x_{3})\}_{2}\\
&=\{x_{1},x_{2},x_{3}\}_{1}+ \phi \{x_{1},x_{2},x_{3}\}_{1}.
\end{align*}
Then $\Gamma_{\phi}$ is closed under the bracket $\{.,.,.\}_{A_{1}\oplus A_{2}}$.\\
We have also
\begin{align*}
(\alpha_{1}+\alpha_{2})(x_{1}+\phi(x_{1}))=\alpha_{1}(x_{1})+\alpha_{2}\circ \phi(x_{1})=\alpha_{1}(x_{1})+\phi \circ \alpha_{1}(x_{1}),
\end{align*}
which implies that $(\alpha_{1}+\alpha_{2})\Gamma_{\phi}\subseteq \Gamma_{\phi}$.\\
Conversely, if the graph $\Gamma_{\phi}\subseteq A_{1}\oplus A_{2}$ is a Hom-subalgebra of
$$(A_{1}\oplus A_{2},\mu_{A_{1}\oplus A_{2}},\{.,.,.\}_{A_{1}\oplus A_{2}},\alpha_{A_{1}\oplus A_{2}}),$$ 
then we have
$$\{\phi(x_{1}),\phi(x_{2}),\phi(x_{3})\}_{2}=\phi \{x_{1},x_{2},x_{3}\}_{1},$$
and
\begin{align*}
(\alpha_{1}+\alpha_{2})(x+\phi(x))&=\alpha_{1}(x)+\alpha_{2}\circ \phi(x)\in \Gamma_{\phi}
\\&=\alpha_{1}(x)+\phi \circ \alpha_{1}(x).
\end{align*}
Finally
\begin{align*}
\mu_{A_{1}\oplus A_{2}}(x_{1}+\phi(x_{1}),x_{2}+\phi(x_{2}))&=\mu_{1}(x_{1},x_{2})+\mu_{2}(\phi(x_{1}),\phi(x_{2}))\\
&=\mu_{1}(x_{1},x_{2})+\phi \circ \mu_{2}(x_{1},x_{2})\subseteq  \Gamma_{\phi}.
\end{align*}
Therefore $\phi$ is a morphism of ternary (non-commutative) Hom-Nambu-Poisson algebras.

\end{proof}

\section{Tensor product and direct sums}
In the following, we define a direct sum of two ternary (non-commutative) Hom-Nambu-Poisson  algebras.
\begin{theorem}
Let $(A_{1},\mu_{1},\{.,.,.\}_{1},\alpha_{1})$ and $(A_{2},\mu_{2},\{.,.,.\}_{2},\alpha_{2})$ be two ternary (non-commutative) Hom-Nambu-Poisson algebras.  Let $\mu_{A_{1}\oplus A_{2}}$ be a bilinear map  on $A_{1}\oplus A_{2}$ defined for $x_1,y_1,z_1\in A_1$ and $x_2,y_2,z_2\in A_1$ by 
$$\mu(x_{1}+x_{2},y_{1}+y_{2})=\mu_{1}(x_{1},y_{1})+\mu_{2}(x_{2},y_{2}),$$   $\{.,.,.\}_{A_{1}\oplus A_{2}}$  a trilinear map  defined by $$\{x_{1}+x_{2},y_{1}+y_{2},z_1+z_2\}_{A_{1}\oplus A_{2}}=\{x_1,y_1,z_1\}_1+\{x_2,y_2,z_2\}_2$$ and  $\alpha_{A_{1}\oplus A_{2}}$ a linear map defined by $$\alpha_{A_{1}\oplus A_{2}}(x_1+y_1)=\alpha_1(x_1)+\alpha_2(x_2).$$  Then 
$$
(A_{1}\oplus A_{2},\mu_{A_{1}\oplus A_{2}},\{.,.,.\}_{A_{1}\oplus A_{2}},\alpha_{A_{1}\oplus A_{2}})$$
is a ternary (non-commutative) Hom-Nambu-Poisson algebra.
\end{theorem}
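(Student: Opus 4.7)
The strategy is to verify directly each of the three defining axioms of a non-commutative ternary Hom-Nambu-Poisson algebra on $A_1\oplus A_2$, exploiting the fact that $\mu_{A_1\oplus A_2}$, $\{.,.,.\}_{A_1\oplus A_2}$, and $\alpha_{A_1\oplus A_2}$ all act componentwise with no interaction between the two summands. Concretely, $\mu_{A_1\oplus A_2}(x_1+x_2,y_1+y_2)$ contains only ``diagonal'' terms $\mu_i(x_i,y_i)$, and similarly for the bracket; cross terms vanish by the given definitions. Consequently each required identity on $A_1\oplus A_2$ will split as the sum of the corresponding identity on $A_1$ and on $A_2$, both of which hold by hypothesis.

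First I would verify that $(A_1\oplus A_2,\mu_{A_1\oplus A_2},\alpha_{A_1\oplus A_2})$ is Hom-associative. For $x=x_1+x_2$, $y=y_1+y_2$, $z=z_1+z_2$, expanding $\mu_{A_1\oplus A_2}(\alpha_{A_1\oplus A_2}(x),\mu_{A_1\oplus A_2}(y,z))$ yields $\mu_1(\alpha_1(x_1),\mu_1(y_1,z_1))+\mu_2(\alpha_2(x_2),\mu_2(y_2,z_2))$, which by Hom-associativity of each $(A_i,\mu_i,\alpha_i)$ equals $\mu_1(\mu_1(x_1,y_1),\alpha_1(z_1))+\mu_2(\mu_2(x_2,y_2),\alpha_2(z_2))=\mu_{A_1\oplus A_2}(\mu_{A_1\oplus A_2}(x,y),\alpha_{A_1\oplus A_2}(z))$.

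Second, for the ternary Hom-Nambu-Lie structure, skew-symmetry of $\{.,.,.\}_{A_1\oplus A_2}$ is immediate from the componentwise definition together with skew-symmetry of each $\{.,.,.\}_i$. The fundamental identity from Definition~\ref{def:TerAlg} (with $\widetilde{\alpha}=(\alpha_{A_1\oplus A_2},\alpha_{A_1\oplus A_2})$) is obtained by substituting $x^{(k)}=x^{(k)}_1+x^{(k)}_2$ for $k=1,\dots,5$ on both sides, expanding, and observing that the $A_1$-part of the left-hand side matches the $A_1$-part of the right-hand side by the Hom-Nambu identity in $A_1$, and analogously for $A_2$.

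Finally, the Hom-Leibniz identity is checked by the same componentwise expansion: writing $x_i=x_i^{(1)}+x_i^{(2)}$, the left-hand side $\{\mu_{A_1\oplus A_2}(x_1,x_2),\alpha_{A_1\oplus A_2}(x_3),\alpha_{A_1\oplus A_2}(x_4)\}_{A_1\oplus A_2}$ decomposes into an $A_1$-piece and an $A_2$-piece, each of which coincides with the corresponding piece of the right-hand side by the Hom-Leibniz identity on $A_i$. The only ``obstacle'' here is purely bookkeeping: one must be careful with the five-variable expansion in the fundamental identity to confirm no cross terms arise. Since every structure map respects the direct-sum decomposition, no mathematical subtlety appears beyond the verifications already available in the component algebras.
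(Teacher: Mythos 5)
Your proposal is correct and follows essentially the same route as the paper's own proof: a direct componentwise verification of Hom-associativity, skew-symmetry, the Hom-Nambu identity, and the Hom-Leibniz identity, each of which splits into the corresponding identities in $A_1$ and $A_2$ because all structure maps act diagonally with no cross terms. The only point the paper makes explicitly that you leave implicit is the (trivial) observation that $\mu_{A_1\oplus A_2}$ is commutative whenever $\mu_1$ and $\mu_2$ are, which settles the commutative case of the statement.
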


\begin{proof}
The commutativity of $\mu_{A_{1}\oplus A_{2}}$ is obvious since $\mu_1$ and $\mu_2$ are commutative. The skew-symmetry of the bracket follows  from the
 skew-symmetry of $\{.,.,.\}_{1}$ and $\{.,.,.\}_{2}$. So it remains to check the Hom-associativity, the Hom-Nambu and the Hom-Leibniz
 identities.  For Hom-associativity identity, we have
\begin{align*}
&\mu_{A_{1}\oplus A_{2}}(\mu_{A_{1}\oplus A_{2}}(x_{1}+x'_{1},x_{2}+x'_{2}),\alpha_{A_{1}\oplus A_{2}}(x_{3}+x'_{3}))\\&=\mu_{A_{1}\oplus A_{2}}(\mu_{1}(x_{1},x_{2})+\mu_{2}(x'_{1},x'_{2}),\alpha_{1}(x_{3})+\alpha_{2}(x'_{3}))
\\&=\mu_{1}(\mu_{1}(x_{1},x_{2}),\alpha_{1}(x_{3}))+ \mu_{2}(\mu_{2}(x'_{1},x'_{2}),\alpha_{2}(x'_{3}))
\\&=\mu_{1}(\alpha_{1}(x_{1}),\mu_{1}(x_{2},x_{3}))+\mu_{2}(\alpha_{2}(x'_{1}),\mu_{2}(x'_{2},x'_{3}))
\\&=\mu_{A_{1}\oplus A_{2}}(\alpha_{1}(x_{1})+\alpha_{2}(x'_{1}),\mu_{1}(x_{2},x_{3})+\mu_{2}(x'_{2},x'_{3}))
\\&=\mu_{A_{1}\oplus A_{2}}(\alpha_{A_{1}\oplus A_{2}}(x_{1},x'_{1}), \mu_{A_{1}\oplus A_{2}}(x_{2}+x'_{2},x_{3}+x'_{3})).
\end{align*}
Now we prove the Hom-Nambu identity
\begin{align*}
&\{\alpha_{A_{1}\oplus A_{2}}(x_{1}+x'_{1}),\alpha_{A_{1}\oplus A_{2}}(x_{2}+x'_{2}),\{x_{3}+x'_{3}, x_{4}+x'_{4},x_{5}+x'_{5}\}_{A_{1}\oplus A_{2}}\}_{A_{1}\oplus A_{2}}\\
&=\{\alpha_{1}(x_{1})+\alpha_{2}(x'_{1}),\alpha_{1}(x_{2})+\alpha_{2}(x'_{2}),\{x_{3},x_{4},x_{5}\}_{1}+\{x'_{3},x'_{4},x'_{5}\}_{2}\}_{A_{1}\oplus A_{2}}\\
&=\{\alpha_{1}(x_{1}),\alpha_{1}(x_{2}),\{x_{3},x_{4},x_{5}\}_{1}\}_{1}+\{\alpha_{2}(x'_{1}),\alpha_{2}(x'_{2}),\{x'_{3},x'_{4},x'_{5}\}_{2}\}_{2}\\
&=\{\{x_{1},x_{2},x_{3}\}_{1},\alpha_{1}(x_{4}),\alpha_{1}(x_{5})\}_{1}+\{\alpha_{1}(x_{3}),\{x_{1},x_{2},x_{4}\}_{1},\alpha_{1}(x_{5})\}_{1}\\
&+\{\alpha_{1}(x_{3}),\alpha_{1}(x_{4}),\{x_{1},x_{2},x_{5}\}_{1}\}_{1}+\{\{x'_{1},x'_{2},x'_{3}\}_{2},\alpha_{2}(x'_{4}),\alpha_{2}(x'_{5})\}_{2}\\
&+\{\alpha_{2}(x'_{3}),\{x'_{1},x'_{2},x'_{4}\}_{2},\alpha_{2}(x'_{5})\}_{2}+\{\alpha_{2}(x'_{3}),\alpha_{2}(x'_{4}),\{x'_{1},x'_{2},x'_{5}\}_{2}\}_{2}\\
&=\{\{x_{1},x_{2},x_{3}\}_{1}+\{x'_{1},x'_{2},x'_{3}\}_{2},\alpha_{1}(x_{4})+\alpha_{2}(x'_{4}),\alpha_{1}(x_{5})+\alpha_{2}(x'_{5})\}_{A_{1}\oplus A_{2}}\\
&+\{\alpha_{1}(x_{3})+\alpha_{2}(x'_{3}),\{x_{1},x_{2},x_{4}\}_{1}+\{x'_{1},x'_{2},x'_{4}\}_{2},\alpha_{1}(x_{5})+\alpha_{2}(x'_{5})\}_{A_{1}\oplus A_{2}}\\
&+\{\alpha_{1}(x_{3})+\alpha_{2}(x'_{3}),\alpha_{1}(x_{3})+\alpha_{2}(x'_{3}),\{x_{1},x_{2},x_{5}\}_{1}+\{x'_{1},x'_{2},x'_{5}\}_{2}\}_{A_{1}\oplus A_{2}}\\
&=\{\{x_{1}+x'_{1},x_{2}+x'_{2},x_{3}+x'_{3}\}_{A_{1}\oplus A_{2}},\alpha_{A_{1}\oplus A_{2}}(x_{4}+x'_{4}),\alpha_{A_{1}\oplus A_{2}}(x_{5}+x'_{5})\}_{A{1}\oplus A_{2}}\\
&+\{\alpha_{A_{1}\oplus A_{2}}(x_{3}+x'_{3}),\{x_{1}+x'_{1},x_{2}+x'_{2},x_{4}+x'_{4}\}_{A_{1}\oplus A_{2}},\alpha_{A_{1}\oplus A_{2}}(x_{5}+x'_{5})\}_{A{1}\oplus A_{2}}\\
&+\{\alpha_{A_{1}\oplus A_{2}}(x_{3}+x'_{3}),\alpha_{A_{1}\oplus A_{2}}(x_{4}+x'_{4}),\{x_{1}+x'_{1},x_{2}+x'_{2},x_{5}+x'_{5}\}_{A_{1}\oplus A_{2}}\}_{A_{1}\oplus A_{2}}.
\end{align*}
Finally, for   Hom-Leibniz identity we have
\begin{align*}
&\{\mu_{A_{1}\oplus A_{2}}(x_{1}+x'_{1}),\alpha_{A_{1}\oplus A_{2}}(x_{3},x'_{3}),\alpha_{A_{1}+A_{2}}(x_{4},x'_{4})\}_{A_{1}\oplus A_{2}}\\
&=\{\mu_{1}(x_{1},x_{2})+\mu_{2}(x'_{1},x'_{2}),\alpha_{1}(x_{3})+\alpha_{2}(x'_{3}),\alpha_{1}(x_{4})+\alpha_{2}(x'_{4})\}_{A_{1}\oplus A_{2}}\\
&=\{\mu_{1}(x_{1},x_{2}),\alpha_{1}(x_{3}),\alpha_{1}(x_{4})\}_{1}+\{\mu_{2}(x'_{1},x'_{2}),\alpha_{2}(x'_{3}),\alpha_{2}(x'_{4})\}_{2}\\
&=\mu_{1}(\alpha_{1}(x_{1}),\{x_{2},x_{3},x_{4}\}_{1})+\mu_{1}(\{x_{1},x_{3},x_{4}\}_{1},\alpha_{1}(x_{2}))\\
&+\mu_{2}(\alpha_{2}(x'_{1}),\{x'_{2},x'_{3},x'_{4}\}_{2})+\mu_{2}(\{x'_{1},x'_{3},x'_{4}\}_{2},\alpha_{2}(x'_{2}))\\
&=\mu_{A_{1}\oplus A_{2}}(\alpha_{A_{1}\oplus A_{2}}(x_{1},x'_{1}),\{x_{2}+x'_{2},x_{3}+x'_{3},x_{4}+x'_{4}\}_{A_{1}\oplus A_{2}})\\
&+\mu_{A_{1}\oplus A_{2}}(\{x_{1}+x'_{1},x_{3}+x'_{3},x_{4}+x'_{4}\}_{A_{1}\oplus A_{2}},\alpha_{A_{1}\oplus A_{2}}(x_{2},x'_{2})).
\end{align*}
That ends the proof.
\end{proof}

Now, we define the tensor product of two ternary Hom-algebras. Moreover, we consider a tensor product of a ternary Hom-Nambu-Poisson algebra  and a  totally Hom-associative symmetric ternary algebra. \\
Let $A_{1}=(A,m,\alpha)$, where $\alpha=(\alpha_{i})_{i=1,2}$ and $A_{2}=(A',m',\alpha')$ where $\alpha'=(\alpha'_{i})_{i=1,2}$ be two
ternary (non-commutative) Hom-algebras of a given type, the tensor product $A_{1}\otimes A_{2}$ is a ternary algebra defined by the triple
$(A\otimes A', m\otimes m',\alpha\otimes \alpha')$
where $\alpha\otimes \alpha'=(\alpha_{i}\otimes \alpha'_{i})_{i=1,2}$ with
\begin{align}
m\otimes m'(x_{1}\otimes x'_{1},x_{2}\otimes x'_{2},x_{3}\otimes x'_{3})=m(x_{1},x_{2},x_{3})\otimes m'(x'_{1},x'_{2},x'_{3}),
\end{align}
\begin{align}
\alpha_{i}\otimes \alpha'_{i}=\alpha_{i}(x_1)\otimes \alpha'_{i}(x'_1),
\end{align}
where $x_{1},x_{2},x_{3} \in A_{1}$ and $x'_{1},x'_{2},x'_{3}\in A_{2}$.

Recall that $(A,m,\alpha)$ is a totally Hom-associative ternary algebra  if 
\begin{eqnarray*}
m(\alpha_{1}(x_{1}),\alpha_{2}(x_{2}),m(x_{3},x_{4},x_{5}))= m(\alpha_{1}(x_{1}),m(x_{2},x_{3},x_{4}),\alpha_{2}(x_{5}))\\
= m(m(x_{1},x_{2},x_{3}),\alpha_{1}(x_{4}),\alpha_{2}(x_{5})).
\end{eqnarray*}
for all  $x_{1}\cdots,x_{5}\in A$, 
and the ternary multiplication $m$ is symmetric if \begin{align}
m(x_{\sigma(1)},x_{\sigma(2)},x_{\sigma(3)})=m(x_{1},x_{2},x_{3}).
\end{align}
for all   $\sigma\in S_{3}$, $x_{1},x_{2},x_{3}\in A$.

\begin{lemma}
Let $A_{1}=(A,m,\alpha)$ and $A_{2}=(A',m',\alpha')$ be two ternary Hom-algebras of   given type (Hom-Nambu, totally Hom-associative). If $m$ is symmetric and $m'$ is skew-symmetric then
$m \otimes m'$ is skew-symmetric.
\end{lemma}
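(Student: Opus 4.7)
The plan is to reduce the claim to a direct computation by applying the defining formula of $m \otimes m'$ to a permuted triple of simple tensors, and then invoking the symmetry hypothesis on $m$ together with the skew-symmetry hypothesis on $m'$ factor by factor.

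First, I would fix an arbitrary permutation $\sigma \in S_3$ and simple tensors $x_i \otimes x'_i$ for $i=1,2,3$, and expand
$$
(m\otimes m')\bigl(x_{\sigma(1)}\otimes x'_{\sigma(1)},\, x_{\sigma(2)}\otimes x'_{\sigma(2)},\, x_{\sigma(3)}\otimes x'_{\sigma(3)}\bigr)
= m(x_{\sigma(1)},x_{\sigma(2)},x_{\sigma(3)})\otimes m'(x'_{\sigma(1)},x'_{\sigma(2)},x'_{\sigma(3)}),
$$
using the definition of the tensor ternary product. Then, by the symmetry of $m$, the left-hand factor collapses to $m(x_1,x_2,x_3)$, and by the skew-symmetry of $m'$, the right-hand factor becomes $\operatorname{Sgn}(\sigma)\, m'(x'_1,x'_2,x'_3)$. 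Collecting the scalar $\operatorname{Sgn}(\sigma)$ outside the tensor product yields
$$
(m\otimes m')\bigl(x_{\sigma(1)}\otimes x'_{\sigma(1)},\, x_{\sigma(2)}\otimes x'_{\sigma(2)},\, x_{\sigma(3)}\otimes x'_{\sigma(3)}\bigr)
= \operatorname{Sgn}(\sigma)\,(m\otimes m')(x_1\otimes x'_1, x_2\otimes x'_2, x_3\otimes x'_3),
$$
which is exactly the skew-symmetry of $m\otimes m'$ on simple tensors.

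Finally, I would remark that since $m\otimes m'$ is trilinear and $A\otimes A'$ is spanned by simple tensors, the identity extends by multilinearity to arbitrary elements of $(A\otimes A')^{\times 3}$, completing the proof. I do not expect any genuine obstacle here: the argument is a direct bookkeeping computation, and the only subtlety is noticing that the permutation $\sigma$ acts diagonally on the two tensor slots, so the symmetry of one factor and the skew-symmetry of the other combine cleanly to produce the single sign $\operatorname{Sgn}(\sigma)$.
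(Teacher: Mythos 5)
Your proof is correct and is exactly the direct computation the paper has in mind: the authors dispose of this lemma with the single word ``Straightforward,'' and your expansion on simple tensors---using the symmetry of $m$ to fix the first factor, the skew-symmetry of $m'$ to produce the sign $\operatorname{Sgn}(\sigma)$, and multilinearity to extend from simple tensors---is the intended argument. No gaps; nothing further is needed.
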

\begin{proof}
Straightforward. 
\end{proof}
\begin{theorem}
Let $(A,\mu,\beta,\{.,.,.\},(\alpha_{1},\alpha_{2}))$ be a ternary (non-commutative) Hom-Nambu-Poisson algebra, $(B,\tau,(\alpha'_{1},\alpha'_{2}))$ be 
a  totally Hom-associative symmetric ternary algebra, and $(B,\mu',\beta')$ be a Hom-associative algebra, then
$$(A\otimes B, \mu \otimes \mu',\beta \otimes\beta',\{.,.,.\}_{A\otimes B},(\alpha_{1}\otimes\alpha'_{1},\alpha_{2}\otimes\alpha'_{2}))$$
is a (non-commutative) ternary Hom-Nambu-Poisson algebra if and only if
\begin{align}\label{LeibAss}
\tau(\mu'(b_{1},b_{2}),b_{3},b_{4})=\mu'(b_{1},\tau(b_{2},b_{3},b_{4}))=\mu'(\tau(b_{1},b_{3},b_{4}),b_{2}).
\end{align}
\end{theorem}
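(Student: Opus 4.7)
The plan is to unpack the three defining axioms of a ternary (non-commutative) Hom-Nambu-Poisson algebra on the tensor product and verify each one, tracking precisely where the compatibility \eqref{LeibAss} enters. I take the ternary bracket on $A\otimes B$ to be $\{a_{1}\otimes b_{1},a_{2}\otimes b_{2},a_{3}\otimes b_{3}\}_{A\otimes B}=\{a_{1},a_{2},a_{3}\}\otimes \tau(b_{1},b_{2},b_{3})$; its skew-symmetry is supplied by the lemma just proved, since $\tau$ is totally symmetric and $\{\cdot,\cdot,\cdot\}$ is skew-symmetric. Hom-associativity of $\mu\otimes\mu'$ with twist $\beta\otimes\beta'$ is routine: expand both sides on elementary tensors and apply Hom-associativity of $(A,\mu,\beta)$ and $(B,\mu',\beta')$ to the respective factors. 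This step does not use \eqref{LeibAss}.

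For the ternary Hom-Nambu identity on $A\otimes B$, I expand each side on elementary tensors. The $A$-factors reduce to the Hom-Nambu identity on $A$, so the point is to equate the $B$-factors. The left-hand side yields $\tau(\alpha'_{1}(b_{1}),\alpha'_{2}(b_{2}),\tau(b_{3},b_{4},b_{5}))$, while the three right-hand terms yield
\[
\tau(\tau(b_{1},b_{2},b_{3}),\alpha'_{1}(b_{4}),\alpha'_{2}(b_{5})),\quad \tau(\alpha'_{1}(b_{3}),\tau(b_{1},b_{2},b_{4}),\alpha'_{2}(b_{5})),\quad \tau(\alpha'_{1}(b_{3}),\alpha'_{2}(b_{4}),\tau(b_{1},b_{2},b_{5})).
\]
Using the symmetry of $\tau$ to permute entries into the normal form with $\alpha'_{1}$-twist on the first slot and $\alpha'_{2}$-twist on the last, followed by the total Hom-associativity chain of $\tau$ to move the inner $\tau$ through the three canonical positions, one identifies all four expressions. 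Once they agree, the ternary Hom-Nambu identity on $A\otimes B$ collapses onto the Hom-Nambu identity on $A$.

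The Hom-Leibniz identity is where \eqref{LeibAss} becomes decisive. Expanding its left-hand side and using the Hom-Leibniz identity of $A$ on the $A$-factor gives a sum of two tensors sharing the common $B$-factor $\tau(\mu'(b_{1},b_{2}),\alpha'_{1}(b_{3}),\alpha'_{2}(b_{4}))$. Expanding its right-hand side produces exactly the same two $A$-tensors, but weighted by the $B$-factors $\mu'(\beta'(b_{1}),\tau(b_{2},b_{3},b_{4}))$ and $\mu'(\tau(b_{1},b_{3},b_{4}),\beta'(b_{2}))$ respectively. Condition \eqref{LeibAss}, applied with twisted arguments substituted in as necessary and combined with Hom-associativity on $B$, identifies these three $B$-factors and thus establishes Hom-Leibniz on $A\otimes B$.

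For the converse, suppose the Hom-Leibniz identity holds on $A\otimes B$ for all inputs, and choose $x_{1},\ldots,x_{4}\in A$ so that the two $A$-tensors $\mu(\beta(x_{1}),\{x_{2},x_{3},x_{4}\})$ and $\mu(\{x_{1},x_{3},x_{4}\},\beta(x_{2}))$ are linearly independent, which is possible in any nondegenerate example. Separating the coefficients on both sides forces the three $B$-expressions above to coincide for all $b_{1},\ldots,b_{4}$, which after absorbing the twists is precisely \eqref{LeibAss}. The main obstacle I anticipate is the reconciliation of the four $\tau$-expressions in the Hom-Nambu step: one must apply symmetry and total Hom-associativity in the correct order, and this bookkeeping — while mechanical — is the only place where real care is required.
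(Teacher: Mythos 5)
Your argument for the forward direction coincides with the paper's own proof: skew-symmetry from the preceding lemma, componentwise Hom-associativity of $\mu\otimes\mu'$, reduction of the Hom-Nambu identity to the equality of the four $\tau$-expressions via the symmetry and total Hom-associativity of $\tau$, and reduction of the Hom-Leibniz identity to condition \eqref{LeibAss} on the $B$-factors. The only difference is that you also sketch the converse, which the paper's proof silently omits despite the ``if and only if''; your separation-of-coefficients argument there is sensible, but as you yourself note it requires the two $A$-tensors $\mu(\beta(x_{1}),\{x_{2},x_{3},x_{4}\})$ and $\mu(\{x_{1},x_{3},x_{4}\},\beta(x_{2}))$ to be linearly independent for some choice of inputs, so strictly speaking the ``only if'' direction is established only under that nondegeneracy hypothesis on $A$.
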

\begin{proof}
Since $\mu$ and $\mu'$ are both Hom-associative multiplication whence a tensor product $\mu \otimes \mu'$ is Hom-associative. Also the commutativity of
$\mu \otimes \mu'$, the skew-symmetry of $\{.,.,.\}$ and the symmetry of $\tau$ imply the skew-symmetry of $\{.,.,.\}_{A\otimes B}$. Therefore, it  remains
to check Nambu identity and Leibniz identity.

We have
\begin{align*}
 LHS=&\{\alpha_{1}\otimes\alpha'_{1}(a_{1}\otimes b_{1}),\alpha_{2}\otimes\alpha'_{2}(a_{2}\otimes b_{2}),\{a_{3}\otimes b_{3},a_{4}\otimes b_{4},a_{5}\otimes b_{5}\}_{A\otimes B}\}_{A\otimes B}\\
&=\{\alpha_{1}(a_{1})\otimes \alpha'_{1}(b_{1}),\alpha_{2}(a_{2})\otimes \alpha'_{2}(b_{2}),\{a_{3},a_{4},a_{5}\}_{A}\otimes \tau(b_{3},b_{4},b_{5})\}_{A\otimes B}\\
&=\underbrace{\{\alpha_{1}(a_{1}),\alpha_{2}(a_{2}),\{a_{3},a_{4},a_{5}\}\}}_{a}\otimes \underbrace{\tau(\alpha'_{1}(b_{1}),\alpha'_{2}(b_{2}),\tau(b_{3},b_{4},b_{5}))}_{b},
\end{align*}
and
\begin{align*}
 RHS=&\{\{a_{1}\otimes b_{1},a_{2}\otimes b_{2},a_{3}\otimes b_{3}\}_{A\otimes B},\alpha_{1}\otimes\alpha'_{1}(a_{4}\otimes b_{4}),\alpha_{2}\otimes\alpha'_{2}(a_{5}\otimes b_{5})\}_{A\otimes B}\\
&+\{\alpha_{1}\otimes\alpha'_{1}(a_{3}\otimes b_{3}),\{a_{1}\otimes b_{1},a_{2}\otimes b_{2},a_{4}\otimes b_{4}\}_{A\otimes B},\alpha_{2}\otimes \alpha'_{2}(a_{5}\otimes b_{5})\}_{A\otimes B}\\
&+\{\alpha_{1}\otimes\alpha'_{1}(a_{3}\otimes b_{3}),\alpha_{2}\otimes\alpha'_{2}(a_{4}\otimes b_{4}),\{a_{1}\otimes b_{1},a_{2}\otimes b_{2},a_{5}\otimes b_{5}\}_{A\otimes B}\}_{A\otimes B}\\
&=\{\{a_{1},a_{2},a_{3}\}_{A}\otimes \tau(b_{1},b_{2},b_{3}),\alpha_{1}(a_{4})\otimes \alpha'_{1}(b_{4}),\alpha_{2}(a_{5})\otimes \alpha'_{2}(b_{5})\}_{A\otimes B}\\
&+\{\alpha_{1}(a_{3})\otimes \alpha'_{1}(b_{3}),\{a_{1},a_{2},a_{4}\}_{A}\otimes \tau(b_{1},b_{2},b_{4}),\alpha_{2}(a_{5})\otimes \alpha'_{2}(b_{5})\}_{A\otimes B}\\
&+\{\alpha_{1}(a_{3})\otimes \alpha'_{1}(b_{3}),\alpha_{2}(a_{4})\otimes \alpha'_{2}(b_{4}),\{a_{1},a_{2},a_{5}\}_{A}\otimes \tau(b_{1},b_{2},b_{5})\}_{A\otimes B}\\
&=\underbrace{\{\{a_{1},a_{2},a_{3}\},\alpha_{1}(a_{4}),\alpha_{2}(a_{5})\}}_{c}\otimes \underbrace{\tau(\tau(b_{1},b_{2},b_{3}),\alpha'_{1}(b_{4}),\alpha'_{2}(b_{5}))}_{d}\\
&+\underbrace{\{\alpha_{1}(a_{3}),\{a_{1},a_{2},a_{4}\},\alpha_{2}(a_{5})\}}_{e}\otimes \underbrace{\tau(\alpha'_{1}(b_{3}),\tau(b_{1},b_{2},b_{4}),\alpha'_{2}(b_{5}))}_{f}\\
&+\underbrace{\{\alpha_{1}(a_{3}),\alpha_{2}(a_{4}),\{a_{1},a_{2},a_{5}\}\}}_{g}\otimes \underbrace{\tau(\alpha'_{1}(b_{3}),\alpha'_{2}(b_{4}),\tau(b_{1},b_{2},b_{5}))}_{h}\\
\end{align*}
Using ternary Nambu identity of $\{.,.,.\}$ we have $a=c+e+g$, 
and $b=d=f=h$ using the symmetry of $\tau$ and Hom-associativity of $\mu'$, then the left hand side is equal to the right hand side from where the ternary
Hom-Nambu identity of bracket $\{.,.,.\}_{A\otimes B}$ is verified.

For the Hom-Leibniz identity, we have
\begin{align*}
LHS=&\{\mu \otimes\mu'(a_{1}\otimes b_{1},a_{2}\otimes b_{2}),\alpha_{1}\otimes\alpha'_{1}(a_{3}\otimes b_{3}),\alpha_{2}\otimes\alpha'_{2}(a_{4}\otimes b_{4})\}_{A\otimes B}\\
&=\{\mu(a_{1},b_{1})\otimes \mu'(a_{2},b_{2}),\alpha_{1}(a_{3})\otimes\alpha'_{1}(b_{3}),\alpha_{2}(a_{4})\otimes\alpha'_{2}(b_{4})\}_{A\otimes B}\\
&=\underbrace{\{\mu(a_{1},b_{1}),\alpha_{1}(a_{3}),\alpha_{2}(a_{4})\}_{A}}_{a'}\otimes \underbrace{\tau(\mu'(a_{2},b_{2}),\alpha'_{1}(b_{3}),\alpha'_{2}(b_{4}))}_{b'}\\
\end{align*}
and 
\begin{align*}
RHS=&\mu \otimes\mu'(\beta \otimes \beta'(a_{1}\otimes b_{1}),\{a_{2}\otimes b_{2},a_{3}\otimes b_{3},a_{4}\otimes b_{4}\}_{{A\otimes B}})\\
&+\mu \otimes\mu'(\{a_{1}\otimes b_{1},a_{3}\otimes b_{3},a_{4}\otimes b_{4}\}_{A\otimes B},\beta \otimes \beta'(a_{2}\otimes b_{2}))\\
&=\mu \otimes\mu'(\beta (a_{1})\otimes \beta'(b_{1}),\{a_{2},a_{3},a_{4}\}\otimes \tau(b_{2},b_{3},b_{4}))\\
&+\mu \otimes\mu'(\{a_{1},a_{3},a_{4}\}\otimes \tau(b_{1},b_{3},b_{4}),\beta (a_{2})\otimes \beta'(b_{2}))\\
&=\underbrace{\mu(\beta (a_{1}),\{a_{2},a_{3},a_{4}\})}_{c'}\otimes \underbrace{\mu'(\beta'(b_{1}),\tau(b_{2},b_{3},b_{4}))}_{d'}\\
&+\underbrace{\mu(\{a_{1},a_{3},a_{4}\},\beta (a_{2}))}_{e'}\otimes \underbrace{\mu'(\tau(b_{1},b_{3},b_{4}),\beta'(b_{2})}_{f'}
\end{align*}

With Hom-Leibniz identity we have $a'=c'+e'$, and using condition \eqref{LeibAss}
we have $b'=d'=f'$, for that the left hand side is equal to the right hand side and the Hom-Leibniz identity  is proved. Then
\begin{center}
$(A\otimes B, \mu \otimes \mu',\beta \otimes\beta',\{.,.,.\}_{A\otimes B},(\alpha_{1}\otimes\alpha'_{1},\alpha_{2}\otimes\alpha'_{2}))$
\end{center}

is a (non-commutative) ternary Hom-Nambu-Poisson algebra.

\end{proof}

\section{Construction of ternary Hom-Nambu-Poisson algebras}
In this section, we provide constructions of ternary Hom-Nambu-Poisson algebras using 
 twisting principle.

\begin{theorem}
Let $(A,\mu,\{.,.,.\},\alpha)$ be a (non-commutative) ternary Hom-Nambu-Poisson algebra and $\beta:A \rightarrow A$ be a weak morphism, then
$A_{\beta}=(A,\{.,.,.\}_{\beta}=\beta \circ \{.,.,.\},\mu_{\beta}=\beta \circ \mu,\beta \alpha)$
is also a ternary (non-commutative) Hom-Nambu-Poisson algebra. Morever, if $A$ is multiplicative and $\beta$ is a algebra morphism, then $A_{\beta}$ is
a multiplicative (non-commutative) Hom-Nambu-Poisson algebra.
\end{theorem}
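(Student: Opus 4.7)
The plan is to verify each of the three defining axioms of a (non-commutative) ternary Hom-Nambu-Poisson algebra for the twisted datum $A_{\beta} = (A, \mu_{\beta}, \{.,.,.\}_{\beta}, \beta\alpha)$: Hom-associativity of $(A, \mu_{\beta}, \beta\alpha)$, the ternary Hom-Nambu-Lie identity for $(A, \{.,.,.\}_{\beta}, (\beta\alpha, \beta\alpha))$, and the Hom-Leibniz identity relating the two through $\beta\alpha$. Skew-symmetry of $\{.,.,.\}_{\beta}$ is automatic, since it is the linear map $\beta$ postcomposed with an already skew-symmetric bracket.

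The common mechanism for all three identities is the following. Each occurrence of a twisted operation in the identity to be established contributes one extra factor of $\beta$ on the outside of the corresponding original operation. The weak-morphism hypotheses $\beta\circ\mu = \mu\circ\beta^{\otimes 2}$ and $\beta\circ\{.,.,.\} = \{.,.,.\}\circ\beta^{\otimes 3}$ then allow those $\beta$'s to be pushed into the arguments, so that both sides of the twisted identity become expressions in $\mu$, $\{.,.,.\}$ and $\alpha$ applied to arguments of the form $\beta^{k}(x_{i})$ for the appropriate $k$. The identity is then an immediate consequence of the corresponding original axiom of $A$ evaluated at these twisted arguments. Concretely, in twisted Hom-associativity both sides of $\mu_{\beta}(\beta\alpha(x), \mu_{\beta}(y,z)) = \mu_{\beta}(\mu_{\beta}(x,y), \beta\alpha(z))$ reduce, after pushing $\beta$'s inward, to $\mu(\beta^{2}\alpha(x), \mu(\beta^{2}y, \beta^{2}z))$ and $\mu(\mu(\beta^{2}x, \beta^{2}y), \beta^{2}\alpha(z))$ respectively, which are equal by the original Hom-associativity of $\mu$ applied at $\beta^{2}x, \beta^{2}y, \beta^{2}z$. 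The same pattern handles the five-variable twisted Hom-Nambu identity (which reduces to the original Hom-Nambu identity at $\beta^{2}x_{1},\ldots,\beta^{2}x_{5}$) and the four-variable twisted Hom-Leibniz identity (which reduces to the original Hom-Leibniz identity at suitably $\beta$-twisted arguments).

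For the final multiplicativity assertion, if $A$ is multiplicative and $\beta$ is an algebra morphism, so in particular $\beta\circ\alpha = \alpha\circ\beta$, then chaining the multiplicativity of $\alpha$ with the morphism property of $\beta$ yields, by a direct composition check, $\beta\alpha\circ\mu_{\beta} = \mu_{\beta}\circ(\beta\alpha)^{\otimes 2}$ and $\beta\alpha\circ\{.,.,.\}_{\beta} = \{.,.,.\}_{\beta}\circ(\beta\alpha)^{\otimes 3}$. The only non-trivial point anywhere in the argument is the bookkeeping of the powers of $\beta$ carried by each argument when both sides are rewritten in terms of the original operations; once this is kept straight, every verification collapses to an instance of an axiom already available for $(A, \mu, \{.,.,.\}, \alpha)$.
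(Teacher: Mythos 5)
Your overall strategy is the same as the paper's: check skew-symmetry, Hom-associativity, the Hom-Nambu identity and the Hom-Leibniz identity for the twisted operations using $\beta\circ\mu=\mu\circ\beta^{\otimes 2}$ and $\beta\circ\{.,.,.\}=\{.,.,.\}\circ\beta^{\otimes 3}$, then verify multiplicativity separately. However, the reduction mechanism you describe --- push every $\beta$ inward and invoke the original axiom ``at the arguments $\beta^{2}(x_i)$'' --- does not work as stated. The original Hom-associativity evaluated at $\beta^{2}(x),\beta^{2}(y),\beta^{2}(z)$ reads
\[
\mu\big(\alpha(\beta^{2}(x)),\mu(\beta^{2}(y),\beta^{2}(z))\big)=\mu\big(\mu(\beta^{2}(x),\beta^{2}(y)),\alpha(\beta^{2}(z))\big),
\]
whereas the expressions you arrive at carry $\beta^{2}(\alpha(x))$ and $\beta^{2}(\alpha(z))$ in those slots. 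Identifying $\alpha\circ\beta^{2}$ with $\beta^{2}\circ\alpha$ requires $\alpha$ and $\beta$ to commute, which is precisely what the weak-morphism hypothesis does \emph{not} provide (commutation with $\alpha$ is the extra condition that upgrades a weak morphism to a morphism). The same mismatch occurs in your treatment of the Hom-Nambu and Hom-Leibniz identities, where the twisting maps sit on arguments that your bookkeeping forces past $\beta^{2}$.

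The gap is repairable, and the repair is what the paper actually does: run the $\beta$'s in the opposite direction, factoring each side of the twisted identity as $\beta^{2}$ applied to the corresponding side of the \emph{original} identity at the \emph{original} arguments, e.g.
\[
\mu_{\beta}(\mu_{\beta}(x,y),\beta\alpha(z))=\beta^{2}\big(\mu(\mu(x,y),\alpha(z))\big)=\beta^{2}\big(\mu(\alpha(x),\mu(y,z))\big)=\mu_{\beta}(\beta\alpha(x),\mu_{\beta}(y,z)),
\]
so that $\alpha$ never has to be moved past $\beta$. (Your two reduced expressions are in fact equal, but because each is $\beta^{2}$ of one side of the untwisted axiom in disguise, not because they form an instance of the axiom at $\beta^{2}$-twisted arguments.) Your multiplicativity argument is fine as written, since there you correctly invoke $\beta\circ\alpha=\alpha\circ\beta$, which is available once $\beta$ is a genuine morphism.
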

\begin{proof}
If $\mu$ is commutative, then clearly so is $\mu_{\beta}$. The rest of the proof applies whether $\mu$ is commutative or not. The skew-symmetry
 follows from the skew-symmetry of the bracket $\{.,.,.\}$. It remains to prove Hom-associativity condition, Hom-Nambu-identity and Hom-Leibniz
identity. Indeed
\begin{align*}
\mu_{\beta}(\mu_{\beta}(x,y),\beta\alpha(z))&=\mu_{\beta}(\beta(\mu(x,y),\beta\alpha(z)))=\beta^{2}(\mu(\mu(x,y),\alpha(z)))\\&=\beta^{2}(\mu(\alpha(x),\mu(y,z)))=\mu_{\beta}(\beta\alpha(x),\mu_{\beta}(y,z)).
\end{align*}

We check the Hom-Nambu identity
\begin{align*}
&\{\beta\alpha(x_{1}),\beta\alpha(x_{2}),\{x_{3},x_{4},x_{5}\}_{\beta}\}_{\beta}=\beta^{2}\{\alpha(x_{1}),\alpha(x_{2}),\{x_{3},x_{4},x_{5}\}\}\\
&=\beta^{2}(\{\{x_{1},x_{2},x_{3}\},\alpha(x_{4}),\alpha(x_{5})\}
+\{\alpha(x_{3}),\{x_{1},x_{2},x_{4}\},\alpha(x_{5})\} \\ & \quad
+\{\alpha(x_{3}),\alpha(x_{4}),\{x_{1},x_{2},x_{5}\}\})\\
&=\{\{x_{1},x_{2},x_{3}\}_{\beta},\beta\alpha(x_{4}),\beta\alpha(x_{5})\}_{\beta}
+\{\beta\alpha(x_{3}),\{x_{1},x_{2},x_{4}\}_{\beta},\beta\alpha(x_{5})\}_{\beta} \\ & \quad
+\{\beta\alpha(x_{3}),\beta\alpha(x_{4}),\{x_{1},x_{2},x_{5}\}_{\beta}\}_{\beta}.
\end{align*}

Then, it remains to show Hom-Leibniz identity
\begin{align*}
&\{\mu_{\beta}(x_{1},x_{2}),\beta\alpha(x_{3}),\beta\alpha(x_{4})\}_{\beta}=\beta^{2}(\{\mu(x_{1},x_{2}),\alpha(x_{3}),\alpha(x_{4})\})\\
&=\beta^{2}(\mu(\alpha(x_{1}),\{x_{2},x_{3},x_{4}\})+\mu(\{x_{1},x_{3},x_{4}\},\alpha(x_{2})))\\
&=\mu_{\beta}(\beta\alpha(x_{1}),\{x_{2},x_{3},x_{4}\}_{\beta})+\mu_{\beta}(\{x_{1},x_{3},x_{4}\}_{\beta},\beta\alpha(x_{2})).
\end{align*}

Therefore $A_{\beta}=(A,\{.,.,.\}_{\beta},\mu_{\beta},\beta \alpha)$ is a ternary (non-commutative) Hom-Nambu-Poisson algebra.
For the multiplicativity assertion, suppose that $A$ is multiplicative and $\beta$ is an algebra  morphism.
We have
\begin{align*}
(\beta\alpha)\circ(\mu_{\beta})=\beta\alpha \circ \beta \circ \mu=\mu_{\beta}\circ \alpha^{\otimes 2} \beta^{\otimes 2}=\mu_{\beta}\circ(\beta\alpha)^{\otimes 2},
\end{align*}
and
\begin{align*}
\beta\alpha\circ\{.,.,.\}_{\beta}=\beta\alpha \circ \beta \circ \{.,.,.\}=\{.,.,.\}_{\beta}\circ(\beta\alpha)^{\otimes 3}.
\end{align*}
Then $A_{\beta}$ is multiplicative.

\end{proof}

\begin{corollary}
Let $(A,\mu,\{.,.,.\},\alpha)$ be a multiplicative ternary (non-commutative) Hom-Nambu-Poisson algebra. Then
\begin{center}
$A^{n}=(A,\mu^{(n)}=\alpha^{n} \circ \mu,\{.,.,.\}^{(n)}=\alpha^{(n)}\circ \{.,.,.\},\alpha^{n+1})$
\end{center}
is a multiplicative (non-commutative) ternary Hom-Nambu-Poisson algebra for each integer $n\geq 0$.
\end{corollary}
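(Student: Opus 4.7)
The strategy is to obtain $A^{n}$ from a single application of the preceding theorem with the twisting map $\beta := \alpha^{n}$, rather than iterating the twisting construction $n$ times. To do so I first need to verify that $\alpha^{n}$ qualifies as a morphism of the Hom-Nambu-Poisson algebra $A$ in the sense of that theorem.

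Since $A$ is multiplicative by hypothesis, we have $\alpha\circ\mu = \mu\circ\alpha^{\otimes 2}$ and $\alpha\circ\{.,.,.\} = \{.,.,.\}\circ\alpha^{\otimes 3}$. A straightforward induction on $k$ extends these to $\alpha^{k}\circ\mu = \mu\circ(\alpha^{k})^{\otimes 2}$ and $\alpha^{k}\circ\{.,.,.\} = \{.,.,.\}\circ(\alpha^{k})^{\otimes 3}$ for every $k\geq 0$. This exhibits $\alpha^{n}$ as a weak morphism, and since trivially $\alpha^{n}\circ\alpha = \alpha\circ\alpha^{n}$, it is in fact a morphism.

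With this in hand, I invoke the preceding theorem with $\beta = \alpha^{n}$. Its conclusion gives directly that
\[
\bigl(A,\; \alpha^{n}\circ\{.,.,.\},\; \alpha^{n}\circ\mu,\; \alpha^{n}\circ\alpha\bigr) = \bigl(A,\; \{.,.,.\}^{(n)},\; \mu^{(n)},\; \alpha^{n+1}\bigr) = A^{n}
\]
is a (non-commutative) ternary Hom-Nambu-Poisson algebra. The multiplicativity clause of the theorem, combined with the fact just established that $\alpha^{n}$ is an algebra morphism, then further guarantees that $A^{n}$ itself is multiplicative. The degenerate case $n=0$ reduces to $A^{0}=A$, which holds trivially.

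There is no genuine obstacle: the entire argument is a packaging of the prior theorem, and the only step requiring any verification is the routine induction confirming that powers of $\alpha$ inherit multiplicativity. An alternative would be to induct on $n$ and reapply the twisting theorem at each stage, but the one-shot approach above is cleaner and avoids re-checking multiplicativity at every level.
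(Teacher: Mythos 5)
Your proof is correct and follows essentially the same route as the paper: both deduce from multiplicativity that $\alpha^{n}$ is a (Nambu-Poisson algebra) morphism and then apply the preceding twisting theorem once with $\beta=\alpha^{n}$. Your version merely spells out the routine induction that the paper leaves implicit.
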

\begin{proof}
The multiplicativity of $A$ implies that $\alpha^{n}:A\rightarrow A$ is a Nambu-Poisson algebra  morphism.
By Theorem $4.2$  $A_{\alpha^{n}}=A^{n}$ is a multiplicative ternary (non-commutative) Hom-Nambu-Poisson algebra.
\end{proof}

\begin{corollary}
Let $(A,\mu,\{.,.,.\})$ be a ternary (non-commutative) Nambu-Poisson algebra and $\beta:A\rightarrow A$ be a Nambu-Poisson  algebra  morphism. Then
\begin{center}
$A_{\beta}=(A,\mu_{\beta}=\beta \circ \mu,\{.,.,.\}_{\beta}=\beta \circ \{.,.,.\},\beta)$
\end{center}
is a multiplicative (non-commutative) ternary Hom-Nambu-Poisson algebra.
\end{corollary}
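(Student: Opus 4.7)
The plan is to derive Corollary 4.4 directly from Theorem 4.2 by taking the twisting map $\alpha$ to be the identity. First I would observe that any classical (non-commutative) ternary Nambu-Poisson algebra $(A,\mu,\{.,.,.\})$ can be viewed as a Hom-Nambu-Poisson algebra $(A,\mu,\{.,.,.\},\id)$: indeed, with $\alpha=\id$ the Hom-associativity condition reduces to the usual associativity, the ternary Hom-Nambu identity reduces to the fundamental identity \eqref{NambuIdentity}, and the Hom-Leibniz identity collapses to the ordinary Leibniz rule. Moreover this structure is trivially multiplicative since $\id\circ\mu=\mu\circ\id^{\otimes 2}$ and $\id\circ\{.,.,.\}=\{.,.,.\}\circ\id^{\otimes 3}$.

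Next I would verify that $\beta$ qualifies as a weak morphism (and in fact a morphism) of the Hom-algebra $(A,\mu,\{.,.,.\},\id)$. The assumption that $\beta$ is a Nambu-Poisson algebra morphism gives exactly
\[
\beta\circ\{.,.,.\}=\{.,.,.\}\circ\beta^{\otimes 3},\qquad \beta\circ\mu=\mu\circ\beta^{\otimes 2},
\]
and the compatibility with the twisting map is automatic since $\beta\circ\id=\id\circ\beta$.

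Having set these up, I would apply Theorem 4.2 to the Hom-Nambu-Poisson algebra $(A,\mu,\{.,.,.\},\id)$ and the morphism $\beta$. The theorem produces
\[
A_{\beta}=(A,\{.,.,.\}_{\beta}=\beta\circ\{.,.,.\},\ \mu_{\beta}=\beta\circ\mu,\ \beta\circ\id)
\]
as a (non-commutative) ternary Hom-Nambu-Poisson algebra, and the final factor $\beta\circ\id$ is simply $\beta$, matching the statement. Since the original algebra is multiplicative and $\beta$ is an algebra morphism, the multiplicativity clause of Theorem 4.2 ensures $A_{\beta}$ is multiplicative as well.

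There is essentially no obstacle here: the whole content is the observation that a classical ternary Nambu-Poisson algebra is the $\alpha=\id$ specialization of the Hom-version, so that Corollary 4.4 is a direct instance of Theorem 4.2. The only point that requires any care is confirming that a Nambu-Poisson morphism $\beta$ in the classical sense is automatically a (weak) morphism in the Hom sense, which is immediate from $\id$ commuting with everything.
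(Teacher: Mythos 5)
Your proposal is correct and is exactly the intended argument: the paper states this corollary without proof, as an immediate specialization of the twisting theorem to $\alpha=\id$, which is the same route used to prove the adjacent corollary on the powers $\alpha^{n}$. Your added care in checking that a classical Nambu-Poisson morphism is automatically a (weak) morphism of the Hom-structure with identity twisting, and that this structure is multiplicative, fills in precisely the details the paper leaves implicit.
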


\begin{remark}
Let $(A,\mu,\{.,.,.\},\alpha)$ and  $(A',\mu',\{.,.,.\}',\alpha')$ be two (non-commutative) ternary Nambu-Poisson algebras and $\beta:A\rightarrow A$, $\beta':A'\rightarrow A'$ be   ternary Nambu-Poisson
 algebra endomorphisms.  If $\varphi : A\rightarrow A'$ is a  ternary Nambu-Poisson algebra morphism that satisfies
$\varphi \circ \beta = \beta' \circ \varphi$, then
\begin{center}
$\varphi : (A,\mu_\beta,\{.,.,.\}_\beta,\beta\alpha) \rightarrow (A',\mu'_{\beta'},\{.,.,.\}'_{\beta'},\beta'\alpha')$
\end{center}
is a (non-commutative) ternary Hom-Nambu-Poisson algebra morphism.

Indeed, we have
\begin{center}
$\varphi \circ \{.,.,.\}_{\beta}=\varphi \circ \beta \circ \{.,.,.\}=\beta' \circ \varphi \circ \{.,.,.\}=\beta' \circ \{.,.,.\}'\circ \varphi^{\times 3}=\{.,.,.\}'_{\beta'} \circ \varphi^{\times 3}$

\end{center}
and
\begin{center}
$\varphi\circ\mu_{\beta}=\varphi\circ\beta\circ\mu=\beta'\circ\varphi\circ\mu=\beta'\circ\mu'\circ\varphi^{\times 2}=\mu'_{\beta'}\circ\varphi^{\times 2}$.
\end{center}
\end{remark}

In the sequel, we aim to construct Hom-type version of the ternary Nambu-Poisson algebra of polynomials of three variables $(\mathds{R}[x,y,z],\cdot,\{.,.,.\})$, defined in Example 1.5. The Poisson bracket of
 three polynomials is defined in \eqref{jacobian}.\\
%\begin{center}
%$\{f,g,h\}=\left|
%\begin{array}{ccc}
%\frac{\partial  f}{\partial  x} & \frac{\partial  f}{\partial  y} & \frac{\partial  f}{\partial  z}\\
%\frac{\partial  g}{\partial  x} & \frac{\partial  g}{\partial  y} & \frac{\partial  g}{\partial  z}\\
%\frac{\partial  h}{\partial  x} & \frac{\partial  h}{\partial  y} & \frac{\partial  h}{\partial  z}
%\end{array}
%\right|,$
%\end{center}
%for all $f,g \in \mathds{R}[x,y,z]$.\\\\
The twisted version is given by a structure of ternary Hom-Nambu-Poisson algebra
 $(\mathds{R}[x,y,z],\cdot_{\alpha}=\alpha \circ \cdot,\{.,.,.\}_{\alpha}=\alpha \circ\{.,.,.\},\alpha)$ where $\alpha :\mathds{R}[x,y,z]\rightarrow \mathds{R}[x,y,z]$ is an algebra morphism  satisfying for all $f,g \in \mathds{R}[x,y,z]$
\begin{center}
$\alpha(f\cdot g)=\alpha(f)\cdot \alpha(g)$\\
$\alpha\{f,g,h\}=\{\alpha(f),\alpha(g),\alpha(h)\}.$
\begin{theorem}
A morphism $\alpha:\mathds{R}[x,y,z]\rightarrow\mathds{R}[x,y,z]$ which gives a structure of ternary Hom-Nambu-Poisson algebra
 $(\mathds{R}[x,y,z],\cdot_{\alpha}=\alpha \circ \cdot,\{.,.,.\}_{\alpha}=\alpha \circ\{.,.,.\},\alpha)$  satisfies the following equation:\\
\begin{align}
1-\left|
\begin{array}{ccc}
\frac{\partial  \alpha(x)}{\partial  x} & \frac{\partial  \alpha(x)}{\partial  y} & \frac{\partial  \alpha(x)}{\partial  z}\\
\frac{\partial  \alpha(y)}{\partial  x} & \frac{\partial  \alpha(y)}{\partial  y} & \frac{\partial  \alpha(y)}{\partial  z}\\
\frac{\partial  \alpha(z)}{\partial  x} & \frac{\partial  \alpha(z)}{\partial  y} & \frac{\partial  \alpha(z)}{\partial  z}
\end{array}
\right|=0,
\end{align}
\end{theorem}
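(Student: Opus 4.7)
The plan is to unpack what it means for $\alpha$ to preserve the Nambu-Poisson bracket, and then evaluate that identity on the simplest possible triple, namely the coordinate polynomials $x$, $y$, $z$. The punchline is that the Jacobian of the triple $(x,y,z)$ is identically $1$, so requiring $\alpha\{x,y,z\} = \{\alpha(x),\alpha(y),\alpha(z)\}$ forces the Jacobian of $(\alpha(x),\alpha(y),\alpha(z))$ to equal $1$ as well.

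\medskip

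\noindent\textbf{Step 1.} Record the two morphism hypotheses built into the statement: $\alpha(f\cdot g) = \alpha(f)\cdot\alpha(g)$ and $\alpha\{f,g,h\} = \{\alpha(f),\alpha(g),\alpha(h)\}$ for all $f,g,h\in\mathbb{R}[x,y,z]$. These are precisely the conditions needed in Corollary 4.4 to guarantee that the twisted triple $(\mathbb{R}[x,y,z],\cdot_\alpha,\{.,.,.\}_\alpha,\alpha)$ is a ternary Hom-Nambu-Poisson algebra, so the hypothesis of the theorem reduces to these two compatibility conditions.

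\medskip

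\noindent\textbf{Step 2.} Compute the bracket on the generators using the Jacobian formula \eqref{jacobian}. Since $\partial x_i/\partial x_j = \delta_{ij}$, the matrix of partials for the triple $(x,y,z)$ is the identity, so $\{x,y,z\} = 1$ in $\mathbb{R}[x,y,z]$. Applying $\alpha$ and using that any unital algebra morphism sends $1$ to $1$ gives $\alpha(\{x,y,z\}) = \alpha(1) = 1$.

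\medskip

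\noindent\textbf{Step 3.} On the other side of the bracket-morphism identity, expand $\{\alpha(x),\alpha(y),\alpha(z)\}$ by the same Jacobian formula: this is exactly the $3\times 3$ determinant appearing in the theorem. Equating the two sides of $\alpha\{x,y,z\} = \{\alpha(x),\alpha(y),\alpha(z)\}$ gives
\begin{equation*}
1 = \left|
\begin{array}{ccc}
\frac{\partial \alpha(x)}{\partial x} & \frac{\partial \alpha(x)}{\partial y} & \frac{\partial \alpha(x)}{\partial z}\\
\frac{\partial \alpha(y)}{\partial x} & \frac{\partial \alpha(y)}{\partial y} & \frac{\partial \alpha(y)}{\partial z}\\
\frac{\partial \alpha(z)}{\partial x} & \frac{\partial \alpha(z)}{\partial y} & \frac{\partial \alpha(z)}{\partial z}
\end{array}
\right|,
\end{equation*}
which is exactly the required equation after rearrangement.

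\medskip

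There is no real obstacle here: the whole argument is a one-line specialization of the bracket-preservation axiom to the generators, combined with the trivial observation that $\{x,y,z\} = 1$ under the Jacobian bracket. The only thing to be careful about is unitality of the algebra morphism $\alpha$, which is implicit in calling it an algebra morphism on $\mathbb{R}[x,y,z]$ (otherwise $\alpha(1)$ would only be an idempotent and one would conclude $\det = \alpha(1) \in\{0,1\}$, but the non-degeneracy of the polynomial ring rules out $\alpha(1)=0$ unless $\alpha \equiv 0$).
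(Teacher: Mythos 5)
Your proof is correct and follows essentially the same route as the paper: both derive the determinant condition from the bracket-preservation identity $\alpha\{f,g,h\}=\{\alpha(f),\alpha(g),\alpha(h)\}$ together with the Jacobian formula \eqref{jacobian}. Your direct specialization to the generators $f=x$, $g=y$, $h=z$ (using $\{x,y,z\}=1$ and $\alpha(1)=1$) is in fact cleaner than the paper's reduction to general monomials, whose final ``simplification'' step is left implicit, and your remark on unitality addresses the only degenerate case the paper glosses over.
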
~\\
\begin{proof}
let  $\alpha$ be a Nambu-Poisson algebra morphism, then it satisfies for all $f, g\in \mathds{R}[x,y,z]$
\begin{center}
$\alpha(f\cdot g)=\alpha(f)\cdot \alpha(g),$\\
$\alpha\{f,g,h\}=\{\alpha(f),\alpha(g),\alpha(h)\}.$
\end{center}
The first equality shows that it is sufficient to just set $\alpha$ on $x$, $y$ and $z$.
For the second equality, we suppose by linearity that
\begin{center}
$f(x,y,z)=x^{i} y^{j}z^{k},$\\
$g(x,y,z)=x^{l} y^{m}z^{p},$\\
$f(x,y,z)=x^{q} y^{r}z^{s}$.
\end{center}
Then we can write the second equation as follows
\begin{center}
$\alpha \left|
\begin{array}{ccc}
\frac{\partial  f}{\partial  x} & \frac{\partial  f}{\partial  y} & \frac{\partial  f}{\partial  z}\\
\frac{\partial  g}{\partial  x} & \frac{\partial  g}{\partial  y} & \frac{\partial  g}{\partial  z}\\
\frac{\partial  h}{\partial  x} & \frac{\partial  h}{\partial  y} & \frac{\partial  h}{\partial  z}
\end{array}
\right|=\left|
\begin{array}{ccc}
\frac{\partial  \alpha(f)}{\partial  x} & \frac{\partial  \alpha(f)}{\partial  y} & \frac{\partial  \alpha(f)}{\partial  z}\\
\frac{\partial  \alpha(g)}{\partial  x} & \frac{\partial  \alpha(g)}{\partial  y} & \frac{\partial  \alpha(g)}{\partial  z}\\
\frac{\partial  \alpha(h)}{\partial  x} & \frac{\partial  \alpha(h)}{\partial  y} & \frac{\partial  \alpha(h)}{\partial  z}
\end{array}
\right|,$
\end{center}
which can be simplified to
\begin{align}
1=\left|
\begin{array}{ccc}
\frac{\partial  \alpha(x)}{\partial  x} & \frac{\partial  \alpha(x)}{\partial  y} & \frac{\partial  \alpha(x)}{\partial  z}\\
\frac{\partial  \alpha(y)}{\partial  x} & \frac{\partial  \alpha(y)}{\partial  y} & \frac{\partial  \alpha(y)}{\partial  z}\\
\frac{\partial  \alpha(z)}{\partial  x} & \frac{\partial  \alpha(z)}{\partial  y} & \frac{\partial  \alpha(z)}{\partial  z}
\end{array}
\right|.
\end{align}
\end{proof}
\end{center}
\begin{example}
We set polynomials:
\begin{center}
$\alpha(x)=P_{1}(x,y,z)=\sum \limits_{0\leq i,j,k\leq d}a_{ijk}x^{i}y^{j}z^{k}$,\\
$\alpha(y)=P_{2}(x,y,z)=\sum\limits_{0\leq i,j,k\leq d}b_{ijk}x^{i}y^{j}z^{k}$,\\
$\alpha(z)=P_{3}(x,y,z)=\sum\limits_{0\leq i,j,k\leq d}c_{ijk}x^{i}y^{j}z^{k}$,\\
\end{center}
where $P_{1}, P_{2}, P_{3}\in \mathds{R}[x,y,z] $, and d the largest degree for each variable.We assume that $a_{0}=b_{0}=c_{0}=0$.

\

\paragraph{\textbf{Case of polynomials of degree one}} 
We take

\begin{center}
$P_{1}(x,y,z)=a_{1}x+a_{2}y+a_{3}z$,\\
$P_{2}(x,y,z)=b_{1}x+b_{2}y+b_{3}z$,\\
$P_{3}(x,y,z)=c_{1}x+c_{2}y+c_{3}z.$
\end{center}

Equation $(2.5)$ becomes
\begin{align}
1-\left|
\begin{array}{ccc}
\frac{\partial  P_{1}(x,y,z)}{\partial  x} & \frac{\partial  P_{1}(x,y,z))}{\partial  y} & \frac{\partial  P_{1}(x,y,z)}{\partial  z}\\
\frac{\partial  P_{2}(x,y,z)}{\partial  x} & \frac{\partial  P_{2}(x,y,z)}{\partial  y} & \frac{\partial  P_{2}(x,y,z)}{\partial  z}\\
\frac{\partial  P_{3}(x,y,z)}{\partial  x} & \frac{\partial  P_{3}(x,y,z)}{\partial  y} & \frac{\partial  P_{3}(x,y,z)}{\partial  z}
\end{array}
\right|=0,
\end{align}
whence
\begin{align}
1-\left|
\begin{array}{ccc}
a_{1} & a_{2} & a_{3}\\
b_{1} & b_{2} & b_{3}\\
c_{1} & c_{2} & c_{3}
\end{array}
\right|=0.
\end{align}\\
The polynomials $P_{1}, P_{2}$ and $P_{3}$ are of one of this form\\
\begin{enumerate}

 \item
          $P_{1}(x,y,z)= x a_{1}+ y a_{2}+ z a_{3}$,
           $P_{2}(x,y,z)= b_{2}y-\frac{z}{a_{1} c_{2}}$,
          $P_{3}(x,y,z)= c_{2}y$.\\

  \item

          $P_{1}(x,y,z)= a_{1}x+  a_{2}y+ a_{3}z $,
           $P_{2}(x,y,z)= \frac{ 1 + a_{1} b_{3} c_{2}}{a_{1} c_{3}} y+ b_{3}z$,\\
           $P_{3}(x,y,z)= c_{2}y+ c_{3}z $.\\

\item $P_{1}(x,y,z)=a_{1}x+a_{2}y+a_{3}z$,
         $P_{2}(x,y,z)= b_{1}x+\frac{1}{a_{2} c_{1}}z$,
          $P_{3}(x,y,z)= c_{1}x$.\\

  \item $P_{1}(x,y,z)=a_{1}x+a_{2}y+a_{3}z$,
         $P_{2}(x,y,z)= \frac{-1+a_{2}b_{3}c_{1}}{a_{2}c_{3}}x+b_{3}z$,\\
          $P_{3}(x,y,z)= c_{1}x+c_{3}z$.\\

 \item $P_{1}(x,y,z)=\frac{a_{2}b_{1}c_{3}+b_{2}}{c_{3}x}+a_{2}y+a_{3}z$,
         $P_{2}(x,y,z)= b_{1}x+b_{2}y+b_{3}z$,\\
          $P_{3}(x,y,z)= c_{3}z$.\\

   \item $P_{1}(x,y,z)=\frac{1}{b_{2}c_{3}}x+a_{2}y+a_{3}z$,
         $P_{2}(x,y,z)= b_{2}y+b_{3}z$,
          $P_{3}(x,y,z)= c_{3}z$.\\

     \item $P_{1}(x,y,z)=a_{1}x+\frac{1}{b_{1}c_{3}}y+a_{3}z$,
         $P_{2}(x,y,z)= b_{1}x+b_{3}z$,
          $P_{3}(x,y,z)= c_{3}z$.\\

  \item $P_{1}(x,y,z)=a_{1}x+a_{2}y+ \frac{1}{b_{1}c_{2}}z$,
         $P_{2}(x,y,z)= b_{1}x$,
          $P_{3}(x,y,z)= c_{1}x+c_{2}y$.\\

  \item $P_{1}(x,y,z)=a_{1}x+\frac{-1}{b_{1}c_{3}+a_{3}c_{2}c_{3}}y+a_{3}z$,
         $P_{2}(x,y,z)= b_{1}x$,\\
          $P_{3}(x,y,z)= c_{1}x+c_{2}y+c_{3}z$.\\

  \item $P_{1}(x,y,z)=\frac{a_{2}b_{1}}{b_{2}}+\frac{1}{b_{2}c_{3}-b_{3}c_{2}}x+a_{2}y+a_{3}z$,
         $P_{2}(x,y,z)= b_{1}x+ b_{2}y+b_{3}z$,
          $P_{3}(x,y,z)= \frac{b_{1}c_{2}}{b_{2}}x+c_{2}y+c_{3}z$.\\

  \item $P_{1}(x,y,z)=\frac{-c_{3}+a_{2}c_{1}c_{2}}{b_{3}c_{2}^{2}}x+a_{2}y+a_{3}z$,
         $P_{2}(x,y,z)= b_{3}z$,\\
          $P_{3}(x,y,z)= c_{1}x+c_{2}y+c_{3}z$.\\

   \item $P_{1}(x,y,z)=a_{1}x+a_{2}y+\frac{1}{b_{1}c_{2}-b_{2}c_{1}}z$,
         $P_{2}(x,y,z)= b_{1}x+b_{2}y$,\\
          $P_{3}(x,y,z)= c_{1}x+c_{2}y$.\\

   \item $P_{1}(x,y,z)=\frac{1+a_{2}b_{1}c_{3}-a_{3}b_{1}c_{2}-a_{2}b_{3}c_{1}+a_{3}b_{2}c_{1}}{b_{2}c_{3}-b_{3}c_{2}}x+a_{2}y+a_{3}z$,\\
         $P_{2}(x,y,z)= b_{1}x+b_{2}y+b_{3}z$,
          $P_{3}(x,y,z)= c_{1}x+c_{2}y+c_{3}z$.\\

   \item $P_{1}(x,y,z)=a_{1}x+\frac{b_{2}}{b_{3}}(a_{3}-\frac{1}{b_{1}c_{2}-b_{2}c_{1}})y+a_{3}z$,
         $P_{2}(x,y,z)= b_{1}x+b_{2}y+b_{3}z$,
          $P_{3}(x,y,z)= c_{1}x+c_{2}y+\frac{b_{3}c_{2}}{b_{2}}z$.
\end{enumerate}

\

\paragraph{\textbf{Particular case of polynomials of degree two}}
We take one of the polynomials of degree two
\begin{center}
$P_{1}(x,y,z)=a_{1}x+a_{2}y+a_{3}z$\\
$P_{2}(x,y,z)=b_{1}x+b_{2}y+b_{3}z$\\
$P_{3}(x,y,z)=c_{1}x+c_{2}y+c_{3}z+c_{4}x^{2}$
\end{center}
The polynomials $P_{1}, P_{2}$ and $P_{3}$ are of one of this form\\
\begin{enumerate}

 \item $P_{1}(x,y,z)=\frac{a_{2}b_{1}}{b_{2}}+\frac{1}{b_{2}c_{3}-b_{3}c_{2}}x+a_{2}y+\frac{a_{2}b_{3}}{b_{2}}z$,
         $P_{2}(x,y,z)= b_{1}x+b_{2}y+b_{3}z$,\\
          $P_{3}(x,y,z)= c_{4}x^{2}+c_{1}x+c_{2}y+c_{3}z$.\\

  \item $P_{1}(x,y,z)=a_{2}x+\frac{a_{3}b_{2}}{b_{3}}y+a_{3}z$,
         $P_{2}(x,y,z)= b_{2}y+b_{3}z$,\\
          $P_{3}(x,y,z)= c_{4}x^{2}+c_{1}x+c_{2}y+\frac{\frac{1}{a_{1}}+b_{3}c_{2}}{b_{2}}z$.\\

  \item $P_{1}(x,y,z)=a_{2}x+a_{2}y+a_{3}z$,
         $P_{2}(x,y,z)= b_{2}y$,\\
          $P_{3}(x,y,z)= c_{4}x^{2}+c_{1}x+c_{2}y+\frac{1}{a_{1}b_{2}}z$.\\

\item $P_{1}(x,y,z)=(\frac{a_{2}b_{1}}{b_{3}}-\frac{1}{c_{2}b_{3}})x+a_{3}z$,
         $P_{2}(x,y,z)= b_{1}x+b_{3}z$,\\
          $P_{3}(x,y,z)= c_{4}x^{2}+c_{1}x+c_{2}y+c_{3}z$.\\

\item $P_{1}(x,y,z)=-\frac{1}{b_{3}c_{2}}x+a_{3}z$,
         $P_{2}(x,y,z)= b_{3}z$,\\
          $P_{3}(x,y,z)= c_{4}x^{2}+c_{1}x+c_{2}y+c_{3}z$.\\

\item $P_{1}(x,y,z)=a_{1}x-\frac{1}{b_{1}c_{3}}y+a_{3}z$,
         $P_{2}(x,y,z)= b_{1}x$,\\
          $P_{3}(x,y,z)= c_{4}x^{2}+c_{1}x+c_{3}z$.\\

 \item $P_{1}(x,y,z)=a_{1}x+\frac{-1}{b_{1}c_{3}}+\frac{a_{3}c_{2}}{c_{3}}y+a_{3}z$,
         $P_{2}(x,y,z)= b_{1}x$,\\
          $P_{3}(x,y,z)= c_{4}x^{2}+c_{1}x+c_{2}y+c_{3}z$.\\

          \item $P_{1}(x,y,z)=a_{1}x+a_{2}y+\frac{1}{b_{1}c_{2}}z$,
         $P_{2}(x,y,z)= b_{1}x$,\\
          $P_{3}(x,y,z)= c_{4}x^{2}+c_{1}x+c_{2}y$.\\

            \item $P_{1}(x,y,z)=a_{1}x+a_{2}y+a_{3}z$,
         $P_{2}(x,y,z)= \frac{(1+a_{2}b_{3}c_{1})}{a_{2}c_{3}}x+b_{3}z$,\\
          $P_{3}(x,y,z)= c_{1}x+c_{3}z$.
 \end{enumerate}

\end{example}

\section{classification}
In this section, we provide the classification of 3-dimensional ternary non-commutative Nambu-Poisson algebras. By straightforward calculations and using a computer algebra system we obtain the following result.
\begin{theorem}
 Every 3-dimensional ternary Nambu-Lie algebra is isomorphic to the ternary algebra defined with respect to basis $\{e_{1},e_{2},e_{3}\}$, by the skew
 symmetric bracket defined as
\begin{align*}
\{e_{1},e_{2},e_{3}\}=e_{1}
\end{align*}

 Moreover it define a 3-dimensional ternary non-commutative Nambu-Poisson algebra $(A,\{.,.,.\},\mu)$ if and only if $\mu$ is one of the following
 non-commutative associative algebra defined as

  \begin{enumerate}
    \item \begin{eqnarray*}
   \mu_{1}(e_{2},e_{1})=a e_{1}  & \mu_{1}(e_{2},e_{2})= e_{2} & \mu_{1}(e_{2},e_{3})= e_{3} \\
  \mu_{1}(e_{3},e_{1})=be_{1} & \mu_{1}(e_{3},e_{2})=be_{2} &  \mu_{1}(e_{3},e_{3})=be_{3},
  \end{eqnarray*}
where $a$, $b$ are parameters.
    \item \begin{eqnarray*}
        \mu_{2}(e_{1},e_{2})=a e_{1}&\mu_{2}(e_{1},e_{3})=be_{1}&\mu_{2}(e_{2},e_{2})=a e_{2}\\
    \mu_{2}(e_{2},e_{3})=be_{2}& \mu_{2}(e_{3},e_{2})=a e_{3}&\mu_{2}(e_{3},e_{3})=be_{3},
      \end{eqnarray*}
    where $a$, $b$ are parameters with  $a\neq 0$
    \item\begin{eqnarray*}
    \mu_{3}(e_{1},e_{3})=a e_{1}&\mu_{3}(e_{2},e_{3})=a e_{2}&\mu_{3}(e_{3},e_{3})=a e_{3}
         \end{eqnarray*}
          where $a$ is a parameter with  $a\neq 0$
  \end{enumerate}
The multiplication not mentioned are equal to zero.
\end{theorem}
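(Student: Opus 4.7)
The plan is to proceed in three stages: classify the underlying ternary Nambu--Lie bracket, use the Leibniz identity to restrict the shape of $\mu$, and then impose associativity to obtain the three families.

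\textbf{Stage 1 (bracket classification).} A skew-symmetric trilinear map on a $3$-dimensional space is determined by the single vector $v = \{e_1,e_2,e_3\}$. In dimension $3$ the Filippov identity \eqref{NambuIdentity} is automatically satisfied: both sides involve brackets of five arguments from a $3$-dimensional space, and a direct check (or skew-symmetry together with a pigeonhole argument after expanding on the basis) shows they vanish identically. Hence every skew-symmetric ternary bracket is Nambu--Lie. Assuming $v\neq 0$, one extends $v$ to a basis and verifies that under a change of basis with matrix $P$ the vector $v$ transforms by $\det(P)\cdot P^{-1}v$, so one may normalize $\{e_1,e_2,e_3\}=e_1$. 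This also pins down the group of bracket-preserving isomorphisms as block matrices $\begin{pmatrix}\lambda & 0\\ * & Q\end{pmatrix}$ with $\det Q=1$, which will be used at the end to kill redundant parameters.

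\textbf{Stage 2 (shape of $\mu$ from Leibniz).} The Leibniz rule says that each adjoint map $D_{ij}(\cdot)=\{e_i,e_j,\cdot\}$ is a derivation of $(A,\mu)$. With the normalized bracket one computes
\[
D_{12}(e_1)=0,\ D_{12}(e_2)=0,\ D_{12}(e_3)=e_1;\quad D_{13}(e_2)=-e_1,\ D_{13}(e_i)=0\ \text{else};
\]
\[
D_{23}(e_1)=e_1,\ D_{23}(e_2)=D_{23}(e_3)=0.
\]
Since $D_{23}$ has eigenvalues $1,0,0$, applying the derivation rule to each product $\mu(e_i,e_j)$ forces $\mu(e_i,e_j)$ to lie in a specific eigenspace (e.g.\ $\mu(e_1,e_1)=0$ as it would need eigenvalue $2$; $\mu(e_1,e_k)\in\mathrm{span}(e_1)$ for $k=2,3$; $\mu(e_i,e_j)\in\mathrm{span}(e_2,e_3)$ for $i,j\in\{2,3\}$). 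Applying $D_{12}$ and $D_{13}$ to these products then relates the remaining coefficients, cutting the $18$ parameters of a general $\mu$ down to only a handful.

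\textbf{Stage 3 (associativity and normalization).} With $\mu$ reduced to a short parametric form, impose the associativity relations $\mu(\mu(e_i,e_j),e_k)=\mu(e_i,\mu(e_j,e_k))$ on basis triples. This yields a small polynomial system in the remaining parameters. Solving it produces a handful of discrete branches, each described by one or two free scalars. Finally, use the bracket-preserving automorphisms from Stage~1 (the triangular block form $\begin{pmatrix}\lambda & 0\\ * & Q\end{pmatrix}$ with $\det Q=1$) to absorb inessential parameters; the surviving branches are precisely $\mu_1,\mu_2,\mu_3$ listed in the theorem.

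\textbf{Main obstacle.} The conceptual ingredients are clean, but the computational bottleneck is the case analysis in Stage~3: once one distinguishes the various branches (according to which of the parameters $\alpha,\beta,\gamma,\delta$ vanish), several polynomial relations of the form $\gamma(\gamma-\alpha)=0$, $\gamma(\beta-\delta)=0$, $\alpha(\alpha+2\beta)=0$, etc., must be combined consistently and matched with the automorphism orbits. This is the step the authors evidently delegate to a computer algebra system, and it is the place where an incorrect case split would cause a spurious family or a missed one.
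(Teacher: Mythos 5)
Your three-stage plan is, in substance, the same route the paper takes: the paper's entire proof of this theorem is the sentence ``By straightforward calculations and using a computer algebra system we obtain the following result,'' so your outline is actually more informative than what the authors record. The ingredients you add are correct and genuinely useful: the fundamental identity is indeed automatic for a top-degree skew-symmetric bracket on a $3$-dimensional space (your vanishing claim is a Pl\"ucker-type identity for the four dependent vectors $v,x_3,x_4,x_5$, which is worth stating explicitly rather than waving at a pigeonhole argument); the normalization $\{e_1,e_2,e_3\}=e_1$ via $v\mapsto \det(P)\,P^{-1}v$ is right, provided you say explicitly that the abelian bracket $v=0$ is being excluded (the theorem as stated silently assumes this); and organizing Stage 2 around the eigenvalues $1,0,0$ of the derivation $D_{23}$ is exactly the right way to force $\mu(e_1,e_1)=0$, $\mu(e_1,e_k),\mu(e_k,e_1)\in\spanvec(e_1)$ and $\mu(e_i,e_j)\in\spanvec(e_2,e_3)$ for $i,j\in\{2,3\}$, which is visibly consistent with the three families $\mu_1,\mu_2,\mu_3$. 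One small slip: the stabilizer condition is $Pe_1=\det(P)e_1$, so the bracket-preserving matrices are upper block triangular $\begin{pmatrix}\lambda & *\\ 0 & Q\end{pmatrix}$ with $\det Q=1$ (in the column convention), not the transpose you wrote.

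That said, what you have is a strategy rather than a proof. The entire content of the classification — that the associativity equations on the reduced parameter set have exactly the three solution branches listed, no more and no fewer — lives in Stage 3, which you explicitly leave unexecuted. Since the theorem is an ``if and only if'' list, an incorrect case split there would produce a wrong statement, and nothing in your write-up (or in the paper) certifies the list. To turn this into a proof you would need to write down the reduced form of $\mu$ after Stage 2 (roughly: $\mu(e_i,e_1),\mu(e_1,e_i)$ determined by scalars tied to the $\spanvec(e_2,e_3)$-block via $D_{12},D_{13}$, plus the four $2\times2$-block products), list the associativity relations on basis triples, and actually solve the resulting quadratic system, recording which branches are identified under the stabilizer. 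That is the step the authors delegated to a computer algebra system, and it cannot be omitted from a self-contained argument.
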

\begin{remark}
The 3-dimensional ternary Nambu-Lie algebra is endowed with a commutative Nambu-Poisson algebra structure only when the multiplication is trivial.
\end{remark}
Using the twisting principle described in  Theorem 4.1, we obtain the following 3-dimensional non-commutative ternary Hom-Nambu-Poisson algebras.

\begin{proposition}
Any 3-dimensional ternary non-commutative Hom-Nambu-Poisson algebra $(A,\{.,.,.\}_{\alpha},\mu_{\alpha},\alpha)$ obtained by a twisting defined with respect
to the basis $\{e_{1},e_{2},e_{3}\}$ by the ternary bracket $\{e_{1},e_{2},e_{3}\}_{\alpha}=c e_{1}$, where $c$ is a parameter, and one of the following binary
Hom-associative algebra defined by  $\mu_{\alpha_{i}}$ and a corresponding structure map

\begin{enumerate}

\item
 \begin{align*}
& \mu_{\alpha_{1}}(e_{2},e_{1})  =  a c e_{1}, && \mu_{\alpha_{1}}(e_{3},e_{1})  =  b c e_{1}, \\
 & \mu_{\alpha_{1}}(e_{2},e_{2})  =a (d e_{1}+ e_{2}), && \mu_{\alpha_{1}}(e_{3},e_{2})=b(d e_{1}+ e_{2}),  \\
   & \mu_{\alpha_{1}}(e_{2},e_{3})  =a(h e_{1}+ ge_{2}+e_{3}),&& \mu_{\alpha_{1}}(e_{3},e_{3})=b(h e_{1}+ ge_{2}+e_{3}),
\end{align*}
with
\begin{align*}
\alpha_{1}(e_{1})=ce_{1},\alpha_{1}(e_{2})=d e_{1}+ e_{2},\alpha_{1}(e_{3})=h e_{1}+ ge_{2}+e_{3}.
\end{align*}
\item
\begin{align*}
&\mu_{\alpha_{2}}(e_{1},e_{2})=  a c e_{1}, && \mu_{\alpha_{2}}(e_{3},e_{1})=  b c e_{1}, \\
 &  \mu_{\alpha_{2}}(e_{2},e_{2})=a(d e_{1}+ e_{2}+l e_{3}), && \mu_{\alpha_{2}}(e_{3},e_{2})=b(d e_{1}+ e_{2}+l e_{3}),  \\
 &  \mu_{\alpha_{2}}(e_{2},e_{3})=a(h e_{1}+e_{3}),&& \mu_{\alpha_{2}}(e_{3},e_{3})=b(h e_{1}+e_{3}),
\end{align*}
with
\begin{align*}
\alpha_{2}(e_{1})=ce_{1},\alpha_{2}(e_{2})=d e_{1}+ e_{2}+l e_{3},\alpha_{2}(e_{3})=h e_{1}+e_{3}e_{3}.
\end{align*}

\item

\begin{align*}
&\mu_{\alpha_{3}}(e_{2},e_{1})=a c e_{1},&& \mu_{\alpha_{3}}(e_{3},e_{1}=b c e_{1}, \\
&\mu_{\alpha_{3}}(e_{2},e_{2})=a(d e_{1}+f e_{2}+\frac{a}{b}(1-f)e_{3}),&& \mu_{\alpha_{3}}(e_{3},e_{2})=b(d e_{1}+f e_{2}+\frac{a}{b}(1-f)e_{3}),\\
&\mu_{\alpha_{3}}(e_{2},e_{3})=a(h e_{1}+\frac{b}{a}(f-1) e_{2}+\frac{(b-g a)}{b}e_{3}),&& \mu_{\alpha_{3}}(e_{3},e_{3})=b(h e_{1}+\frac{b}{a}(f-1) e_{2}+\frac{(b-g a)}{b}e_{3}),
\end{align*}
with
\begin{align*}
\alpha_{3}(e_{1})=ce_{1}, \alpha_{3}(e_{2})=de_{1}+fe_{2}+\frac{a}{b}(1-f)e_{3}), \alpha_{3}(e_{3})=h e_{1}+\frac{b}{a}(f-1) e_{2}+\frac{(b-g a)}{b}e_{3}.
\end{align*}

\item
\begin{align*}
&\mu_{\alpha_{4}}(e_{1},e_{2})=a c e_{1},&&\mu_{\alpha_{4}}(e_{2},e_{3})=b(d e_{1}+ e_{2}), \\
&\mu_{\alpha_{4}}(e_{1},e_{3})=b c e_{1},&&\mu_{\alpha_{4}}(e_{3},e_{2})=a(h e_{1}+ ge_{2}+e_{3}),\\
&\mu_{\alpha_{4}}(e_{2},e_{2})=a(d e_{1}+ e_{2}),&& \mu_{\alpha_{4}}(e_{3},e_{3})=b(h e_{1}+ ge_{2}+e_{3}),\\
\end{align*}
with
\begin{align*}
\alpha_{4}(e_{1})=ce_{1}, \alpha_{4}(e_{2})=d e_{1}+ e_{2}, \alpha_{4}(e_{3})=h e_{1}+ ge_{2}+e_{3}.
\end{align*}

\item

\begin{align*}
&\mu_{\alpha_{5}}(e_{1},e_{2})=a c e_{1},&& \mu_{\alpha_{5}}(e_{2},e_{3})=b(d e_{1}+ e_{2}+l e_{3}),\\
&\mu_{\alpha_{5}}(e_{1},e_{3})=b c e_{1},&& \mu_{\alpha_{5}}(e_{3},e_{2})=a(h e_{1}+e_{3}),\\
&\mu_{\alpha_{5}}(e_{2},e_{2})=a(d e_{1}+ e_{2}+l e_{3}),&& \mu_{\alpha_{5}}(e_{3},e_{3})=b(h e_{1}+ e_{3}).
\end{align*}
with
\begin{align*}
\alpha_{5}(e_{1})=ce_{1}, \alpha_{5}(e_{2})=d e_{1}+ e_{2}+l e_{3}, \alpha_{5}(e_{3})=h e_{1}+ e_{3}.
\end{align*}

\item

\begin{align*}
&\mu_{\alpha_{6}}(e_{1},e_{2})=a c e_{1},&& \mu_{\alpha_{6}}(e_{2},e_{3})=b(d e_{1}+f e_{2}+\frac{a}{b}(1-f) e_{3}), \\
&\mu_{\alpha_{6}}(e_{1},e_{3})=b c e_{1},&&\mu_{\alpha_{6}}(e_{3},e_{2})=a(h e_{1}+\frac{-b}{a}(f-1) e_{2}+ \frac{b-ag}{b}e_{3}),\\
&\mu_{\alpha_{6}}(e_{2},e_{2})=a(d e_{1}+f e_{2}+\frac{a}{b}(1-f) e_{3}),&& \mu_{\alpha_{6}}(e_{3},e_{3})=b(h e_{1}+\frac{-b}{a}(f-1) e_{2}+ \frac{b-ag}{b}e_{3}),
\end{align*}
with
\begin{align*}
\alpha_{6}(e_{1})=ce_{1}, \alpha_{6}(e_{2})=d e_{1}+f e_{2}+\frac{a}{b}(1-f) e_{3}, \alpha_{6}(e_{3})=h e_{1}+\frac{-b}{a}(f-1) e_{2}+ \frac{b-ag}{b}e_{3}.
\end{align*}

\item

\begin{align*}
&\mu_{\alpha_{7}}(e_{1},e_{3})=a c e_{1},\\
&\mu_{\alpha_{7}}(e_{2},e_{3})=a(d e_{1}+f e_{2}+l e_{3}),\\
&\mu_{\alpha_{7}}(e_{3},e_{3})=a(h e_{1}+ ge_{2}+\frac{1+g+l}{f}e_{3}),
\end{align*}
with
\begin{align*}
\alpha_{7}(e_{1})=ce_{1}, \alpha_{7}(e_{2})=d e_{1}+f e_{2}+l e_{3}, \alpha_{7}(e_{3})=h e_{1}+ ge_{2}+\frac{1+g+l}{f}e_{3}.
\end{align*}

\item

\begin{align*}
&\mu_{\alpha_{8}}(e_{1},e_{3})=a c e_{1},\\
&\mu_{\alpha_{8}}(e_{2},e_{3})=a(d e_{1}+ e_{2}),\\
&\mu_{\alpha_{8}}(e_{3},e_{3})=a(h e_{1}+ ge_{2}+e_{3}),
\end{align*}
with
\begin{align*}
\alpha_{8}(e_{1})=ce_{1}, \alpha_{8}(e_{2})=d e_{1}+ e_{2}, \alpha_{8}(e_{3})=h e_{1}+ ge_{2}+e_{3}.
\end{align*}

\item

\begin{align*}
&\mu_{\alpha_{9}}(e_{1},e_{3})=a c e_{1},\\
&\mu_{\alpha_{9}}(e_{2},e_{3})=a(d e_{1}-\frac{1}{g}e_{3}),\\
&\mu_{\alpha_{9}}(e_{3},e_{3})=a(h e_{1}+ ge_{2}+re_{3}),
\end{align*}
with
\begin{align*}
\alpha_{9}(e_{1})=ce_{1}, \alpha_{9}(e_{2})=d e_{1}-\frac{1}{g}e_{3}, \alpha_{9}(e_{3})=h e_{1}+ ge_{2}+re_{3}.
\end{align*}
\end{enumerate}
\end{proposition}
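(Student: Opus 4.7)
The plan is to combine Theorem 5.1, which classifies the 3-dimensional non-commutative Nambu-Poisson algebras, with Corollary 4.4, which says that twisting any such classical structure $(A,\mu,\{.,.,.\})$ by a Nambu-Poisson algebra endomorphism $\beta$ produces a multiplicative non-commutative ternary Hom-Nambu-Poisson algebra $(A,\beta\circ\mu,\beta\circ\{.,.,.\},\beta)$. So the task reduces to determining, for each of the three families $\mu_1,\mu_2,\mu_3$ listed in Theorem 5.1, every linear endomorphism $\alpha$ of $A$ that is simultaneously an associative algebra morphism for $\mu_k$ and a morphism of the skew-symmetric ternary bracket $\{e_1,e_2,e_3\}=e_1$.

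First I would fix the basis $\{e_1,e_2,e_3\}$ and write a generic linear map by its matrix entries $\alpha(e_j)=\sum_i \alpha_{ij}e_i$, introducing nine scalar unknowns. The condition $\alpha(\{e_1,e_2,e_3\})=\{\alpha(e_1),\alpha(e_2),\alpha(e_3)\}$, together with skew-symmetry and trilinearity, expands into a single vector equation whose right-hand side is controlled by the determinant of $\alpha$: it forces $\alpha(e_1)=ce_1$ for some scalar $c$, with $c$ equal to the relevant $2\times 2$ minor of the remaining block of $\alpha$. This explains the universal appearance of $\alpha(e_1)=ce_1$ in every one of the nine cases of the statement and fixes the form of $\{.,.,.\}_\alpha=\alpha\circ\{.,.,.\}$ to be $ce_1$ on $(e_1,e_2,e_3)$.

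Next, for each of the three associative multiplications $\mu_k$, I would impose the associative morphism condition $\alpha\circ\mu_k=\mu_k\circ\alpha^{\otimes 2}$ on all nine ordered pairs of basis elements. This yields a polynomial system in the six remaining entries of $\alpha$ and the parameters $a,b$ of $\mu_k$. Solving the system and separating the non-degenerate branches produces three inequivalent families of admissible twists for each $\mu_k$, for a total of nine families, and labelling the surviving free entries as $d,h,g,f,l,r$ recovers the parametrisations written in the proposition. Finally, I would apply Corollary 4.4 to each admissible pair $(\mu_k,\alpha)$ to read off the twisted product $\mu_{\alpha_k}=\alpha\circ\mu_k$ on basis elements, which is exactly what is tabulated in cases (1)--(9).

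The main obstacle is the case analysis of the polynomial system arising from the associative morphism condition: even after the bracket condition has been used to normalise $\alpha(e_1)$, the equations remain bilinear in the entries of $\alpha$ and in $a,b$, and one must exhaust the branches (distinguishing which coefficients are forced to be nonzero) without overlap. This step is routine but tedious and is best carried out with a computer algebra system, exactly as the authors indicate they did for Theorem 5.1; the verification that each output is indeed a multiplicative Hom-Nambu-Poisson algebra is then automatic from Corollary 4.4.
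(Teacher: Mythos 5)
Your strategy---reduce via the twisting corollary to finding, for each multiplication $\mu_k$ in Theorem 5.1, all linear maps $\alpha$ that are simultaneously associative-algebra morphisms and morphisms of the bracket $\{e_1,e_2,e_3\}=e_1$, and then tabulate $\mu_{\alpha}=\alpha\circ\mu_k$ and $\{.,.,.\}_{\alpha}=\alpha\circ\{.,.,.\}$---is exactly the route the paper takes, which offers no written proof beyond invoking the twisting principle and a computer-algebra case analysis. One small correction to your second paragraph: skew-symmetry gives $\{\alpha(e_1),\alpha(e_2),\alpha(e_3)\}=\det(\alpha)\,e_1$, so the bracket condition reads $\alpha(e_1)=\det(\alpha)\,e_1$; hence $c=\det(\alpha)$, and the complementary $2\times 2$ minor of $\alpha$ must equal $1$ whenever $c\neq 0$ (the minor is not itself equal to $c$).
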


\end{document}